\documentclass[12pt,a4paper,reqno]{amsart}
\usepackage{amssymb,latexsym,amsfonts,amsthm,upref,amsmath,capt-of}
\usepackage[english]{babel}
\usepackage[margin=1in]{geometry} 
\usepackage[foot]{amsaddr}
\usepackage[dvipsnames,x11names]{xcolor}
\usepackage{hyperref}   
\hypersetup{colorlinks,citecolor=green,filecolor=black,linkcolor=blue,urlcolor=blue}
\usepackage{comment} 
\usepackage{enumerate} 
\usepackage{tikz}
\usetikzlibrary{chains}
\usepackage{float}
\usepackage{cleveref}
\usepackage{mathtools}
\usepackage{cite}
\usepackage{filecontents}
\usepackage{bbm}

\makeatletter

\newcommand{\leqnomode}{\tagsleft@true}
\newcommand{\reqnomode}{\tagsleft@false}

\makeatother

\makeatletter
\def\author@andify{%
	\nxandlist {\unskip ,\penalty-1 \space\ignorespaces}%
	{\unskip {} \@@and~}%
	{\unskip \penalty-2 \space \@@and~}%
}
\makeatother

\newcommand{\ta}[1]{\tau_{\lambda_{1},\theta_{1}}}

\newtheorem{thm}{Theorem}

\newtheorem{lem}{Lemma}
\numberwithin{equation}{section}
\numberwithin{prop}{section}
\numberwithin{lem}{section}
\numberwithin{thm}{section}

\numberwithin{cor}{section}

\theoremstyle{definition}
\newtheorem{defn}{Definition}
\numberwithin{defn}{section}

\newtheorem{example}{Example}

\usepackage{mathtools}
\usepackage{tikz}
\usetikzlibrary{chains}
\usepackage{graphics}
\usepackage{epic,eepic}
\usepackage{cite}

\newtheorem{rem}{Remark}
\numberwithin{rem}{section}

\newcommand{\beq}{\begin{equation}}
\newcommand{\eeq}{\end{equation}}
\newcommand{\beqn}{\begin{equation*}}
\newcommand{\eeqn}{\end{equation*}}

\newcommand{\g}{\mathfrak{g}}
\newcommand{\wtg}{\widetilde{\mathfrak{g}}}

\title[]{Quasi-particles and the Kanade--Russell and Kur\c sung\"{o}z formula for Capparelli's identity}

\author{Marijana Butorac}
\author{Slaven Ko\v{z}i\'{c}}
\author{Mirko Primc} 

\address[M. Butorac]{Faculty of Mathematics, University of Rijeka, Radmile Matej\v{c}i\'{c} 2, 51\,000 Rijeka, Croatia}
\email{mbutorac@math.uniri.hr}
\address[S. Ko\v{z}i\'{c} and M. Primc]{Department of Mathematics, Faculty of Science, University of Zagreb,  Bijeni\v{c}ka cesta 30, 10\,000 Zagreb, Croatia}
\email{kslaven@math.hr}
\email{primc@math.hr}

\keywords{combinatorial bases, quasi-particles, standard modules}

\subjclass[2010]{17B67 (Primary), 17B69 (Secondary)}

\begin{document}

\begin{abstract}
We construct a quasi-particle basis of the integrable highest weight module of highest weight $3\Lambda_0$ for the twisted affine Lie algebra of type $A_2^{(2)}$ in the principal realization. More specifically, by introducing the concept of polychromatic quasi-particle and finding relations among quasi-particles, we construct the spanning set of the standard module. Finally, its linear independence is proved by using Kanade--Russell and Kur\c sung\"{o}z's Andrews--Gordon type series of Capparelli's identities.
\end{abstract}

\maketitle

\numberwithin{equation}{section}

\tableofcontents

\frenchspacing

\section*{Introduction}
The works of J. Lepowsky and S. Milne \cite{LM} and J. Lepowsky and R. L. Wilson \cite{LW84} initiated an intensive research program on explicit realizations of the generalized Rogers--Ramanujan identities for standard affine Lie algebra modules by using the theory of vertex operator algebras (see in particular \cite{Capp1,Capp2,Capp3,CMP,KP,MP,N,PS,PT,TX,T}).
As a part of this program, by constructing a twisted vertex operator bases of level three standard $A_2^{(2)}$-modules in the principal realization parametrized by partitions satisfying difference two  condition, S. Capparelli discovered two new combinatorial partition identities of Rogers--Ramanujan-type (cf. \cite{Capp2,Capp3}), which were first proved in work \cite{A1} (see also  \cite{TX}). The first of these identities can be stated as follows.

{\em The number of partitions of a positive integer $n$ into distinct parts which are not congruent to $\pm 1 \ (\text{mod} \ 6)$ is equal to the number of partitions of $n$ into parts that are at least two, with difference at least two, and difference at least four unless the sum of successive parts is a multiple of three.}

The analytic form of the first Capparelli identity is
\beqn
\frac{1}{(q,q^5;q^6)(q^2,q^3,q^9,q^{10};q^{12})} =\frac{1}{(q,q^5;q^6)}\cdot \left. \sum_{n_1,n_2\geq 0} \frac{q^{2n_1^2+6n_1n_2+6n_2^2}x^{n_1+2n_2}}{(q;q)_{n_1}(q^3;q^3)_{n_2}} \ \right|_{x=1},
\eeqn  
where, the left-hand side (that is, the generating function of partitions with congruence restrictions on parts) is obtained from the principal specialization of the Weyl--Kac character formula (cf. \cite{Lep1}, \cite{K}), while the right-hand side, the Andrews--Gordon type (positive) generating function  of partitions with difference restrictions on parts, was found independently by S. Kanade and M. C. Russell in \cite{KR} and by K. Kur\c sung\"{o}z in \cite{Kur}, (see also \cite{Ka}). 

Using the methods of Capparelli, D. Nandi in \cite{N} conjectured three new Rogers--Ramanujan-type identities from the study of level four standard $A_2^{(2)}$-modules, which were proved by M. Takigiku and S. Tsuchioka in \cite{TT}. More recently, S. Capparelli, A. Meurman and M. Primc in \cite{CMP}  studied the standard $A_2^{(2)}$-module of  highest weight $5\Lambda_0$. 

In this paper, we study the standard $A_2^{(2)}$-module with highest weight $3\Lambda_0$ in the principal realization. Motivated by the works of A. Meurman and M. Primc, \cite{MP}, and S. Ko\v zi\' c and M. Primc, \cite{KP}, we generalize the notion of monochromatic quasi-particle (see also works of B. L. Feigin and A. V. Stoyanovsky, \cite{FS}, and G. Georgiev \cite{G}) to polychromatic quasi-particle. In particular, for positive roots $\alpha=\alpha_1$ and $\beta=\alpha_1+\alpha_2$, we define the quasi-particle $X(\alpha, \beta;n)$ of charge two and energy $n$ as 
\beqn
X^{(2)} (  n )=\text{Res}_{\zeta}\left(\zeta^{-n-1}X\left(\alpha,\beta; \zeta \right)\right),
\eeqn
where
\beqn
X\left(\alpha, \beta; \zeta\right)=\displaystyle\text{lim}_{\zeta_1,\zeta_2 \rightarrow \zeta}\left(\left(1+\frac{\zeta_1}{\zeta_2}\right)\left(1-\omega^2\frac{\zeta_1}{\zeta_2}\right)^2\left(1-\omega\frac{\zeta_1}{\zeta_2}\right)X\left(\alpha ; \zeta_1 \right) X\left(\beta ; \zeta_2 \right)\right),
\eeqn
and  
$X\left(\alpha ; \zeta_1 \right)=\sum X^{(1)}(j)\zeta_1^{j}$ and $ X\left(\beta ; \zeta_2 \right)$ 
are twisted vertex operators associated to $\alpha$ and $\beta$, respectively. 
Our main result states that the vectors  
\beqn
\alpha(i_1)\cdots \alpha(i_r)\, 
X^{(1)}(j_1)\cdots X^{(1)}(j_s)\,
X^{(2)}(k_1)\cdots X^{(2)}(k_t)\, v_{3\Lambda_0},
\eeqn
where
$i_1\leq i_2\leq \ldots \leq i_r< 0$, $i_p \equiv \pm 1\mod 6$, $j_1\leq j_2\leq \ldots \leq j_s< 0$, and $k_1\leq k_2\leq \ldots \leq k_t< 0$,  $k_p \equiv 0\mod 3$, such that the difference conditions  
$$j_p\leq j_{p+1}-4,\quad k_p\leq k_{p+1}-12,$$
and initial conditions 
$$j_s\leq -2-6t,\quad k_t\leq -6$$ 
hold, form a basis for the $A_2^{(2)}$-module   $L(3\Lambda_0)$. In order to find difference conditions and show that the given vectors span $L(3\Lambda_0)$, we derive and investigate relations between twisted vertex operators of level three.  Finally, the linear independence of the spanning set is proved by using Kanade--Russell--Kurşung\"{o}z's formula for Capparelli's first identity. 
Moreover, Kanade--Russell--Kurşung\"{o}z's Andrews--Gordon type bivariate generating function (\ref{KursAG}) counts products of $n_1$ quasi-particles of charge 1 and $n_2$ quasi-particles of charge 2 which appear in the quasi-particle basis.

It should be noted that $X\left(\alpha, \beta; \zeta\right)$ appears in \cite[Theorem 13]{Capp2} and plays a key role in the construction of the combinatorial basis for the $A_2^{(2)}$-module $L(3\Lambda_0)$.

\section{Preliminaries on vertex operator construction}\label{section_01}

In this section we introduce the preliminaries on vertex operator construction of the level $1$ standard $A_2^{(2)}$-modules by following \cite{Lep}, \cite{F}. \cite{Capp1} and \cite{N}, by using more or less standard notation.

Let $L=\mathbb{Z}\alpha_1 \oplus \mathbb{Z}\alpha_2$ be a lattice of type $A_2$ with a nonsingular symmetric normalized $\mathbb{Z}$-bilinear form $\left\langle \cdot, \cdot \right\rangle$, with the set of generators
$$
\Phi=\left\lbrace \alpha \in L : \left\langle \alpha, \alpha \right\rangle=2 \right\rbrace,
$$
and with a basis $\Delta=\{\alpha_1,\alpha_2\}$. Following \cite{F} we will use $\nu$ to denote a twisted Coxeter automorphism of order $6$, such that $\nu(\alpha_1)=\alpha_1+\alpha_2$, $\nu(\alpha_2)=-\alpha_1$. This fixed point free isometry of $L$ satisfies
\beqn\label{e2}
\sum_{p \in \mathbb{Z}_6}\nu^{p} =0.
\eeqn

We linearly extend $\left\langle \cdot, \cdot \right\rangle$ and $\nu$ to $\mathfrak{h} = \mathbb{C} \otimes_{\mathbb{Z}}L$. Let $\omega=e^{\frac{2\pi i}{6}}$ denote  the primitive $6$th root of unity. Set
\beqn\label{e3}
\mathfrak{h}=\coprod_{p \in \mathbb{Z}_6} \mathfrak{h}_{(p)},
\eeqn
where $\mathfrak{h}_{(p)}=\left\{ x \in \mathfrak{h}: \nu x=\omega^p x \right\}$. Note that $\mathfrak{h}_{(n)}=\{0\}$ unless $n \equiv \pm 1 (\text{mod} \ 6)$. For $\alpha \in \mathfrak{h}$ and $n \in \mathbb{Z}$ denote by $\alpha_{(n)}$ the projection of $\alpha$ onto $\mathfrak{h}_{(n)}$. Then $\alpha_{(n)}= 0$ unless $n \equiv \pm 1 (\text{mod} \ 6)$.

Consider the $\nu$-twisted affine Lie algebra associated with $\mathfrak{h}$ (viewed as an abelian Lie algebra)
\beqn\label{e4}
\widetilde{\mathfrak{h}}[\nu]=\coprod_{n \in \frac{1}{6}\mathbb{Z}} \mathfrak{h}_{(6n)}\otimes t^{n}\oplus \mathbb{C}c \oplus \mathbb{C}d,
\eeqn
with brackets
\beqn\label{e5}
[\alpha \otimes t^{ \frac{n_1}{6}}, \beta \otimes t^{ \frac{n_2}{6}}] =\frac{n_1}{6} \left\langle \alpha, \beta \right\rangle  \delta_{n_1+n_2,0}c,
\eeqn
\beqn\label{e6}
[d, \alpha \otimes t^{ \frac{n_1}{6}}] = \frac{n_1}{6}\alpha \otimes t^{ \frac{n_1}{6}},
\eeqn
for $\alpha \in \mathfrak{h}_{(n_1)}$,  $\beta \in \mathfrak{h}_{(n_2)}$, $n_1,n_2 \in \mathbb{Z}$, (where we identify $\mathfrak{h}_{(n_1)}=\mathfrak{h}_{(n_1 \ \text{mod} \ 6)}$), and
\beqn\label{e7}
[c, \widetilde{\mathfrak{h}}[\nu]] = 0.
\eeqn
Consider the following subalgebras of $\widetilde{\mathfrak{h}}[\nu]$:
\beqn\label{e8}
\widehat{\mathfrak{h}}[\nu]_{\pm}=\coprod_{\substack{n \in \mathbb{Z},\\ \pm n >0}} \mathfrak{h}_{(n)}\otimes t^{\frac{n}{6}},
\eeqn
\beqn\label{e9}
\mathfrak{b}[\nu]=\widehat{\mathfrak{h}}[\nu]_{+}\oplus \mathbb{C}c \oplus \mathbb{C}d,
\eeqn
and
\beqn\label{e10}
\widehat{\mathfrak{h}}[\nu]=\widehat{\mathfrak{h}}[\nu]_{+}\oplus \widehat{\mathfrak{h}}[\nu]_{-}\oplus \mathbb{C}c,
\eeqn
which is a Heisenberg subalgebra of $\widetilde{\mathfrak{h}}[\nu]$. 

Form the induced $\widetilde{\mathfrak{h}}[\nu]$-module
\beqn\label{e11}
V=U(\widetilde{\mathfrak{h}}[\nu])\otimes_{U(\mathfrak{b}[\nu])}\mathbb{C},
\eeqn
where $\widehat{\mathfrak{h}}[\nu]_{+}\oplus \mathbb{C}d$ acts trivially and $c$ acts as $1$, which is an irreducible $\widehat{\mathfrak{h}}[\nu]$-module  linearly isomorphic to the symmetric algebra $S(\widehat{\mathfrak{h}}[\nu]_{-})$. The action of $d$ defines a $\mathbb{Q}$-grading on $V$
\beqn\label{e12}
V=\coprod_{n \in -\frac{1}{6}\mathbb{Z}}V_{(n)},
\eeqn
where $V_{(n)}$ is the $n$-eigenspace of $d$. 

We will write commuting formal variables $\zeta, \zeta_j$ for formal variables $z^{-1/6}, z_j^{-1/6}$ from \cite{Capp1} and \cite{N}. For $\alpha \in \mathfrak{h}$, $n \in \mathbb{Z}$ we write $\alpha (n)$ for the operator on $V$ associated with $\alpha_{(n)} \otimes t^{\frac{n}{6}} \in \widetilde{\mathfrak{h}}[\nu]$ and set
\beqn\label{e13}
\alpha(\zeta)=\sum_{n \in \mathbb{Z}}\alpha(n)\zeta^n \in \text{End}(V)\{\zeta\}.
\eeqn
For $\alpha \in \mathfrak{h}$ and $p\in \mathbb{Z}_6$ we consider the formal Laurent series
\beqn\label{e14}
E^{\pm}(\nu^p\alpha;\zeta)=\text{exp}\left(6 \sum_{\pm n >0}\alpha(n)\frac{(\omega^p\zeta)^n}{n}\right)=\sum_{\pm n \geq 0}E^{\pm}(\alpha;n)\omega^{pn}\zeta^n ,
\eeqn
and for $\alpha, \beta \in \mathfrak{h}$ the commutation relation
$$
E^{+}(\alpha;\zeta_1)E^{-}(\beta;\zeta_2)=\prod_{p \in \mathbb{Z}_6}\left(1-\omega^{-p}\frac{\zeta_1}{\zeta_2} \right)^{\left\langle \nu^p\alpha, \beta \right\rangle } E^{-}(\beta;\zeta_2)E^{+}(\alpha;\zeta_1).
$$

For $\alpha \in \mathfrak{h}$ let
\beq\label{XEE}
X(\alpha; \zeta)=c_{\alpha}E^{-}(-\alpha;\zeta)E^{+}(-\alpha;\zeta),
\eeq
where $c_{\alpha}=\frac{1+\omega}{36}$, (cf.\cite{KKLW81},\cite{Lep}). For $n \in \mathbb{Z}$ set the component operators $X(\alpha, n)$ by the expansion
\beq\label{e19_label}
X(\alpha; \zeta)=\sum_{n\in \mathbb{Z}}X(\alpha;n)\zeta^{n}.
\eeq
Then we have 
\beq\label{xnuzeta}
X(\nu^p\alpha; n)=X(\alpha;n)\omega^{pn},
\eeq
for all $p \in \mathbb{Z}_6$ and $n \in \mathbb{Z}$, and 
$$
DX(\alpha;\zeta)=-\left[d,X(\alpha;\zeta)\right],
$$
where $D=D_\zeta=\zeta \frac{d}{d\zeta}$.

Let $\alpha, \beta \in \Phi$. Set $I(n) = \left\{ p \in \mathbb{Z}_6:  \left\langle \nu^p\alpha, \beta \right\rangle = n\right\}$, for $n \in \mathbb{Z}$. The commutation relations are given as follows
\begin{align}
\left[ \alpha(\zeta_1), X(\alpha; \zeta_2)\right]  = & \,\sum_{\substack{n \equiv  \pm 1\ \text{mod} \ 6}}\left(\frac{\zeta_1}{\zeta_2}\right)^n\ X(\alpha; \zeta_2),\label{e22}\\
\left[ X(\alpha; \zeta_1), X(\beta;\zeta_2)\right]  = 
&\, \frac{1}{6}\sum_{p \in I(-1)}\epsilon(\nu^p\alpha, \beta)\, 
X(\nu^p\alpha + \beta;\zeta_2)
\,\delta(\omega^{-p}\zeta_1/\zeta_2)\nonumber\\
 & \,+\frac{\epsilon(\beta, -\beta)}{6^2}\sum_{p \in I(-2)}
\left(c\,D\delta(\omega^{-p}\zeta_1/\zeta_2)-6\beta(\zeta_2)\,\zeta_2^{-6}\,\delta(\omega^{-p}\zeta_1/\zeta_2)\right),\label{e220_label}
\end{align}
where $\epsilon:L\times L \rightarrow \mathbb{C}^{\ast}$ is defined by
$$
\epsilon(\alpha, \beta)=(-1)^{\left\langle \nu^{-1}\alpha+\nu^{-2}\alpha,\beta\right\rangle }\omega^{\left\langle \nu^{-1}\alpha+2\nu^{-2}\alpha,\beta\right\rangle }.
$$
We will also  use the following relations
\begin{eqnarray}\label{e24}
	E^{+}(\alpha;\zeta_1) X(\beta; \zeta_2) & =& \prod_{p \in \mathbb{Z}_6}\left(1-\omega^{-p}\frac{\zeta_1}{\zeta_2} \right)^{\left\langle \nu^p\alpha, \beta \right\rangle } X(\beta; \zeta_2)E^{+}(\alpha;\zeta_1), \\
	\label{e25}
	X(\alpha;\zeta_1)E^{-}(\beta;\zeta_2) &=& \prod_{p \in \mathbb{Z}_6}\left(1-\omega^{-p}\frac{\zeta_1}{\zeta_2} \right)^{\left\langle \nu^p\alpha, \beta \right\rangle }E^{-}(\beta;\zeta_2)X(\alpha; \zeta_1).
\end{eqnarray}
In particular, for  $x=\frac{\zeta_1}{\zeta_2}$ we have
\begin{eqnarray*} 
\prod_{p \in \mathbb{Z}_6}\left(1-\omega^{-p}x \right)^{\left\langle \nu^p\alpha, \alpha \right\rangle } &=&\frac{(1 - x)^2(1 -\omega^{-1}x)(1 -\omega^{-5}x)}{(1 -\omega^{-2}x)(1 -\omega^{-3}x)^2(1 -\omega^{-4}x)}, \\
 	\prod_{p \in \mathbb{Z}_6}\left(1-\omega^{-p}x \right)^{\left\langle \nu^p\alpha, \nu \alpha \right\rangle }&=&\frac{(1 - x)(1 -\omega^{-1}x)^2(1 -\omega^{-2}x)}{(1 -\omega^{-3}x)(1 -\omega^{-4}x)^2(1 -\omega^{-5}x)}.
\end{eqnarray*}

Define a Lie algebra $\mathfrak{g}$ as 
\beqn
\mathfrak{g}=\mathfrak{h} \oplus \bigoplus_{\alpha \in \Phi} \mathbb{C}x_{\alpha}
\eeqn
with the brackets
\begin{eqnarray}\nonumber
	[\mathfrak{h},\mathfrak{h}] & =& 0, \\
	\nonumber
	[h, x_{\alpha}] &=& \left\langle h, \alpha \right\rangle x_{\alpha},\\
	\nonumber
	[x_{\alpha}, x_{\beta}]&=&\left\lbrace \begin{array}{ccc} \epsilon(\alpha, -\alpha ) \alpha &  \text{if}  &\alpha + \beta =0\\
		\epsilon(\alpha, \beta ) x_{\alpha + \beta} &  \text{if}  & \left\langle \alpha, \beta \right\rangle=-1\\
		0 &  \text{if} & \left\langle \alpha, \beta \right\rangle\geq 0,\\
	\end{array} \right.
\end{eqnarray} 
for $h \in \mathfrak{h}$ and $\alpha, \beta \in \Phi$. Extend form $\left\langle \cdot, \cdot\right\rangle$ to $\mathfrak{g}$ by
\begin{eqnarray}\nonumber
	\left\langle h, x_{\alpha} \right\rangle &=&  0,\\
	\nonumber
	\left\langle x_{\alpha},  x_{\beta}\right\rangle &=&\left\lbrace \begin{array}{ccc} \epsilon(\alpha, -\alpha ) &  \text{if}  &\alpha + \beta =0\\
		0 &  \text{if}  & \alpha + \beta \neq 0.\\
	\end{array} \right.
\end{eqnarray}
We will use the same symbol $\nu$ for the extension of the automorphism from $\mathfrak{h}$ to $\mathfrak{g}$ defined by
\beqn
\nu x_{\alpha}=x_{\nu \alpha}.
\eeqn
For $n\in\mathbb{Z}$ set the $\omega^n$-eigenspace of $\nu$ in $\g$
\beqn
\g_{(n)}=\{x\in\g\  | \ \nu(x)=\omega^n x\},
\eeqn
and let $\pi_{n}$ be projections of $\g$ onto $\g_{(n)}$. Form the $\nu$-twisted affine Lie algebra $\wtg[\nu]$ associated to $\g$ and $\nu$:
\beqn 
\wtg[\nu]=\coprod_{n\in\frac{1}{6}\mathbb{Z}}\mathfrak{g}_{(6n)}\otimes t^{n} \oplus\mathbb{C}c\oplus\mathbb{C}D.\eeqn
Set $x(n)=x \otimes t^n$ for $x\in\mathfrak{g}_{(n)}$ and $n\in\frac{1}{6}\mathbb{Z}$, then commutation relations in $\wtg[\nu]$ are
\beqn
[x(n_1),y(n_2)]=[x,y](n_1+n_2)+\left<x,y\right>n_1\delta_{n_1+n_2,0}c,
\ [c,\wtg[ \nu]]=0,  \ [D,x(n)]=n x(n).
\eeqn
The Lie algebra $\wtg[\nu]$ is a copy of the principal realization of the affine Lie algebra of type $A_2^{(2)}$, (cf. \cite{F}, \cite{K}, \cite{N}). Let $\left\{h_i, e_i,f_i: i=0,1\right\}$ be canonical generators of $\wtg[\nu]$. We have $c=h_0+2h_1$. Let $\Lambda_i$ ($i = 0,1$) be the fundamental weight such that $\Lambda_i(h_j) = \delta_{ij}$ for all $j=0,1$. Let $L(\Lambda_i)$ be the corresponding standard $A_2^{(2)}$-module of level $\Lambda_i(c)=\Lambda_i(h_0)+2\Lambda_i(h_1)$. By Theorem 9.1 of \cite{Lep} representation of $\widetilde{\mathfrak{h}}[\nu]$ on $V$ extends uniquely to a faithful irreducible Lie algebra representation of $\wtg[\nu]$ on $V$ such that
\beqn
x_{\alpha}(n) \mapsto X(\alpha; n)
\eeqn
for all $n \in \mathbb{Z}$ and $\alpha \in \Phi$.

\section{Quasi-particles for the Lie algebra $A_2^{(2)}$}\label{section_02}

We consider the action of the twisted affine Lie algebra $A_2^{(2)}$ over its standard module $L(\Lambda)$. In  particular, by using the notation from \cite{F}, the action of the formal series \eqref{e19_label} satisfies
\beq\label{xzeta_label}
X(\alpha;\zeta)\in \text{Hom}(L(\Lambda),L(\Lambda)((\zeta))).
\eeq

For fixed $r \in \mathbb{N}$ and $\delta_1, \ldots, \delta_r  \in \Phi$ such that $\left\langle \delta_i, \delta_j\right\rangle \geq 0$ for each $1\leq i,j \leq r$, set 
\beq\label{2e1_povi}
P_{\delta_1, \ldots , \delta_r}\left(\zeta_1, \ldots,\zeta_r\right)=\prod_{1 \leq i < j\leq r} P_{\delta_i, \delta_j}\left(\frac{\zeta_i}{\zeta_j}\right),
\eeq
where
\beq\label{2e2_povi}
P_{\delta_i, \delta_j}\left(\frac{\zeta_i}{\zeta_j}\right)=\prod_{\substack{p \in \mathbb{Z}_m\\ \left\langle \nu^p\delta_i, \delta_j \right\rangle<0}}\left(1-\omega^{-p}\frac{\zeta_i}{\zeta_j}\right)^{-\left\langle \nu^p\delta_i, \delta_j \right\rangle }.
\eeq

\begin{example} For $r=4$ and $\beta=\nu(\alpha)$, we have
\beqn\label{2e30}
P_{\alpha,\beta,\alpha,\beta }\left(\zeta_1,\zeta_2, \zeta_3,\zeta_4\right)=P_{\alpha,\beta}\left(\frac{\zeta_1}{\zeta_2}\right)P_{\alpha,\alpha}\left(\frac{\zeta_1}{\zeta_3}\right)P_{\alpha,\beta}\left(\frac{\zeta_1}{\zeta_4}\right)P_{\beta, \alpha}\left(\frac{\zeta_2}{\zeta_3}\right)P_{\beta, \beta}\left(\frac{\zeta_2}{\zeta_4}\right)P_{\alpha,\beta}\left(\frac{\zeta_3}{\zeta_4}\right),
\eeqn
where
\begin{eqnarray}\nonumber 
P_{\alpha,\beta}\left(\frac{\zeta_i}{\zeta_j}\right)=\overline{P_{\beta, \alpha}}\left(\frac{\zeta_i}{\zeta_j}\right)&=&\left(1+\frac{\zeta_i}{\zeta_j}\right)\left(1-\omega^2\frac{\zeta_i}{\zeta_j}\right)^2\left(1-\omega\frac{\zeta_i}{\zeta_j}\right),\\
\nonumber 
P_{\alpha,\alpha}\left(\frac{\zeta_i}{\zeta_j}\right)=P_{\beta,\beta}\left(\frac{\zeta_i}{\zeta_j}\right)&=&\left(1+\frac{\zeta_i}{\zeta_j}\right)^2\left(1+\omega\frac{\zeta_i}{\zeta_j}\right)\left(1-\omega^2\frac{\zeta_i}{\zeta_j}\right).\end{eqnarray} 
Note that $P_{\alpha,\alpha}(1)\neq 0$ and $P_{\alpha,\beta}(1)\neq 0$.
\end{example}

The commutation relation \eqref{e220_label} implies that for any positive integer $r$ and a permutation $\sigma\in\mathcal{S}_r$ we have
$$
P_{\delta_1, \ldots , \delta_r}\left(\zeta_1, \ldots,\zeta_r\right)X(\delta_1; \zeta_1)  \cdots X(\delta_r; \zeta_r)=
P_{\delta_1, \ldots , \delta_r}\left(\zeta_1, \ldots,\zeta_r\right)X(\delta_{\sigma_1}; \zeta_{\sigma_1})  \cdots X(\delta_{\sigma_r}; \zeta_{\sigma_r}),
$$
so the both sides   of the above equality belong  to
$\text{Hom}(L(\Lambda),L(\Lambda)((\zeta_{\sigma_1}))\ldots((\zeta_{\sigma_r})))$.
As   $\sigma$ was arbitrary, we conclude that 
$P_{\delta_1, \ldots , \delta_r}\left(\zeta_1, \ldots,\zeta_r\right)X(\delta_1; \zeta_1)  \cdots X(\delta_r; \zeta_r)
$ is an element of the intersection
$$
\bigcap_{\sigma\in\mathcal{S}_r}\text{Hom}(L(\Lambda),L(\Lambda)((\zeta_{\sigma_1}))\ldots((\zeta_{\sigma_r})))
= \text{Hom}(L(\Lambda),L(\Lambda)((\zeta_{1},\ldots,\zeta_{ r}))).
$$
Hence,   the limit  
\beq\label{limit}
X\left(\delta_1, \ldots , \delta_r; \zeta\right)=\displaystyle\lim_{\zeta_i \to \zeta}P_{\delta_1, \ldots , \delta_r}\left(\zeta_1, \ldots,\zeta_r\right)X(\delta_1; \zeta_1)\cdots X(\delta_r; \zeta_r)
\eeq
exists. 

By following the approach of S. Ko\v zi\' c and M. Primc in \cite{KP}, we call $X\left(\delta_1, \ldots , \delta_r;n\right)$, a coefficient in
\beq\label{Xdelta_label}
X\left(\delta_1, \ldots , \delta_r;\zeta\right)=\displaystyle\sum_{n \in \mathbb{Z}}X\left(\delta_1, \ldots , \delta_r;n\right)\zeta^{n},
\eeq
the {\em quasi-particle of charge $r$ and energy $n$} (see also  \cite{FS}, \cite{G}). From   \cite[Proposition 3.3]{Capp1} (see also \cite{MP}), follows that the quasi-particle of charge $r$ and energy $n$ equals the infinite sum $\sum_{n_1+\cdots+n_r=n}f(n_1, \ldots, n_r)$ of the summable family 
$$
\left\{f(n_1, \ldots, n_r) : n_1+ \ldots + n_r = n\right\}
$$
which is defined by
$$
\displaystyle\sum_{n_1, \ldots, n_r \in \mathbb{Z}}f(n_1, \ldots, n_r)\zeta_1^{n_1} \ldots\zeta_r^{n_r}=
P_{\delta_1, \ldots , \delta_r}\left(\zeta_1, \ldots,\zeta_r\right)X(\delta_1; \zeta_1)\cdots X(\delta_r; \zeta_r).
$$

We will use the following Lemma, which follows from   relations \eqref{e22}, \eqref{e24} and \eqref{e25}. 
\begin{lem}\label{implem}
 For fixed $\alpha, \delta \in \Phi$ and $\beta=\nu (\alpha)$, we have
\begin{itemize}
	\item[a)] $[\alpha(\zeta_1), X(\alpha, \beta; \zeta_2)]=\sum_{\substack{n \equiv  \pm 1\ \text{mod} \ 6}}\left(1+\omega^{-n}\right)\left(\frac{\zeta_1}{\zeta_2}\right)^n \ X(\alpha,\beta; \zeta_2)$,
	\item[b)] $X(\alpha, \beta; \zeta_1)E^{-}(\delta; \zeta_2)=\prod_{p \in \mathbb{Z}_6}\left(1-\omega^{-p}\frac{\zeta_1}{\zeta_2} \right)^{\left\langle \nu^p(\alpha+\beta), \delta \right\rangle }E^{-}(\delta; \zeta_2)X(\alpha, \beta; \zeta_1)$,
	\item[c)] $E^{+}(\delta; \zeta_1)X(\alpha, \beta; \zeta_2)=\prod_{p \in \mathbb{Z}_6}\left(1-\omega^{-p}\frac{\zeta_1}{\zeta_2} \right)^{\left\langle \nu^p\delta, \alpha+\beta \right\rangle }X(\alpha, \beta; \zeta_2)E^{+}(\delta; \zeta_1)$.
	\end{itemize}
\end{lem}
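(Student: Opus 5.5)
The plan is to reduce all three statements to the corresponding relations for the single vertex operators $X(\alpha;\zeta_1)$ and $X(\beta;\zeta_2)$. Multiply by the polynomial $P_{\alpha,\beta}(\zeta_1/\zeta_2)$, which is a scalar and so commutes with everything. Then pass to the limit $\zeta_1,\zeta_2\to\zeta$ as in \eqref{limit}. The limit exists by the discussion preceding the lemma. Multiplying by a fixed operator series in an independent variable, say $E^{\pm}(\delta;\zeta')$ or $\alpha(\zeta')$, commutes with this limit, since the product $P_{\alpha,\beta}X(\alpha;\zeta_1)X(\beta;\zeta_2)$ lies in $\text{Hom}(L(\Lambda),L(\Lambda)((\zeta_1,\zeta_2)))$ and the limit is just the substitution $\zeta_1=\zeta_2=\zeta$. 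So in each case I move the extra operator past $X(\alpha;\zeta_1)$ and then past $X(\beta;\zeta_2)$, picking up scalar factors. I then check that these factors become the stated ones after setting $\zeta_1=\zeta_2$.

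For b), apply \eqref{e25} twice. This gives the factor
\[
\prod_{p}\left(1-\omega^{-p}\tfrac{\zeta_1}{\zeta'}\right)^{\langle\nu^p\alpha,\delta\rangle}\prod_{p}\left(1-\omega^{-p}\tfrac{\zeta_2}{\zeta'}\right)^{\langle\nu^p\beta,\delta\rangle},
\]
which in the limit becomes $\prod_p(1-\omega^{-p}\zeta/\zeta')^{\langle\nu^p(\alpha+\beta),\delta\rangle}$ by additivity of the exponent. Part c) is identical using \eqref{e24}, since $\langle\nu^p\delta,\alpha\rangle+\langle\nu^p\delta,\beta\rangle=\langle\nu^p\delta,\alpha+\beta\rangle$. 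The only care needed is in which domain these rational factors are expanded. They are expanded in nonnegative powers of $\zeta_i/\zeta'$ (resp.\ $\zeta'/\zeta_i$), so the substitution $\zeta_i\to\zeta$ is legitimate coefficientwise.

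For a), use the derivation property. We have $[\alpha(\zeta'),X(\alpha;\zeta_1)X(\beta;\zeta_2)]=[\alpha(\zeta'),X(\alpha;\zeta_1)]X(\beta;\zeta_2)+X(\alpha;\zeta_1)[\alpha(\zeta'),X(\beta;\zeta_2)]$. The first term is given by \eqref{e22}. For the second, I need $[\alpha(\zeta'),X(\beta;\zeta_2)]$ with $\beta=\nu\alpha$. I would derive it from \eqref{e22} applied to $\beta$ together with \eqref{xnuzeta}. Writing $X(\nu\alpha;\zeta_2)=X(\alpha;\omega\zeta_2)$, we get $[\alpha(\zeta'),X(\alpha;\omega\zeta_2)]=\sum_{n}(\zeta'/(\omega\zeta_2))^nX(\beta;\zeta_2)$, which contributes the factor $\omega^{-n}$. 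Summing the two terms and letting $\zeta_1,\zeta_2\to\zeta$ yields the coefficient $(1+\omega^{-n})$.

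The main obstacle I expect is the sign and root-of-unity bookkeeping in a), namely whether \eqref{xnuzeta} gives $\omega^{n}$ or $\omega^{-n}$. This depends on the convention $\zeta=z^{-1/6}$. I would settle it by checking directly from the definition \eqref{XEE} and \eqref{e14}, where $E^\pm(\nu\alpha;\zeta)=E^\pm(\alpha;\omega\zeta)$. A second, minor point is justifying the exchange of limit and commutator, which follows coefficientwise from the truncation properties above.
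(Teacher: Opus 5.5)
Your proposal is correct and is essentially the paper's argument: the paper just asserts that the lemma follows from \eqref{e22}, \eqref{e24} and \eqref{e25}, and you carry this out by commuting the extra operator past $X(\alpha;\zeta_1)$ and $X(\beta;\zeta_2)$ inside $P_{\alpha,\beta}X(\alpha;\zeta_1)X(\beta;\zeta_2)$ and then passing to the limit \eqref{limit}. In a), your use of \eqref{xnuzeta} in the form $X(\beta;\zeta_2)=X(\alpha;\omega\zeta_2)$, i.e.\ \eqref{e22} with $\zeta_2$ replaced by $\omega\zeta_2$ (not \eqref{e22} for $\beta$ as you first phrase it), already gives the factor $\omega^{-n}$, so the root-of-unity issue you flag needs no further check.
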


\section{Quasi-particle basis of the $A_2^{(2)}$-module $L(3\Lambda_0)$}\label{section_03}

As with the previous section, we consider the action of the twisted affine Lie algebra $A_2^{(2)}$ over its standard module $L(3\Lambda_0)$. 
Our goal is to present our main result, Theorem \ref{main_thm} below, and introduce some related terminology which we use in its proof, which occupies Sections \ref{section_04} and \ref{section_05}.  
We denote by $v_{3\Lambda_0}$ be the highest weight vector of $L(3\Lambda_0)$. 

Recall the following $\widehat{\mathfrak{h}}[\nu]$-filtration on $L(3\Lambda_0)$ (see \cite{LW84}): 
\beq \label{50} 
\{0\}=L(3\Lambda_0)_{(-1)}\subset L(3\Lambda_0)_{(0)}\subset L(3\Lambda_0)_{(1)}\subset \ldots \subset L(3\Lambda_0),
\eeq
\beqn
L(3\Lambda_0)=\bigcup_{j \geq 0}L(3\Lambda_0)_{(j)},
\eeqn
where for $j\in \mathbb{Z}$
\beq \label{51}
L(3\Lambda_0)_{(j)}=U_{(j)}v_{3\Lambda_0} \subset L(3\Lambda_0),  
\eeq
$U=U(\widetilde{\mathfrak{g}}[\nu])$,
$U_{(j)}=\{0\}$ for $j<0$, $U_{(0)}=U(\widehat{\mathfrak{h}}[\nu])$ and for $j>0$
\beq \label{52}
U_{(j)}= \text{span}\left\{ x_1 \cdots x_n \in U:x_i \in \widetilde{\mathfrak{g}}[\nu] \text{ and at most} \ j \ \text{of} \ x_i \ \text{lie outside} \ \widehat{\mathfrak{h}}[\nu] \right\}.
\eeq

\begin{defn}
Denote the operator $X(\alpha  ;\zeta)$,   given by  \eqref{e19_label} for $\alpha=\alpha_1$,  by
$$
X^{(1)}(\zeta)= \sum_{n\in\mathbb{Z}} X^{(1)}(n)\,\zeta^n,
$$
and  $X(\alpha,\beta;\zeta)$, as defined by \eqref{limit} for $r=2,\, \delta_1=\alpha=\alpha_1,\,\delta_2 =\beta=\alpha_1+\alpha_2$, by
$$
X^{(2)}(\zeta)= \sum_{n\in\mathbb{Z}} X^{(2)}(n)\,\zeta^n.
$$
Following the terminology from Section \ref{section_02}, we refer to the coefficients $X^{(1)}(n)$ (resp. $X^{(2)}(n)$)   as quasi-particles of charge 1 (resp. charge 2) and energy $n$.
\end{defn}

By using (\ref{e22}) and (\ref{e220_label}), for any permutation $\sigma\in\mathcal{S}_n$ and $m_1, \ldots ,m_n \in \mathbb{Z}$, we have
\beq \label{53}
X^{(1)}(m_{\sigma(1)})  \cdots X^{(1)}(m_{\sigma(n)})v_{3\Lambda_0}- X^{(1)}(m_{1}) \cdots X^{(1)}(m_{n})v_{3\Lambda_0} \in L(3\Lambda_0)_{(n-1)}.
\eeq
 From (\ref{53}) follows that for any permutation $\sigma\in\mathcal{S}_n$, we have
\beq \label{54}
X^{(2)}(m_{\sigma(1)})  \cdots X^{(2)}(m_{\sigma(n)})v_{3\Lambda_0}- X^{(2)}(m_{1}) \cdots X^{(2)}(m_{n})v_{3\Lambda_0} \in L(3\Lambda_0)_{(2n-1)}.
\eeq

By the Poincar\'{e}--Birkoff--Witt theorem for the universal enveloping algebra $U=U(\widetilde{\mathfrak{g}}[\nu])$, the spanning set of $L(3\Lambda_0)$ can be described as the set of monomial vectors
	$$
	\left\{\alpha(i_1)\cdots \alpha(i_r)\, 
	X^{(1)}(j_1)\cdots X^{(1)}(j_v)\, v_{3\Lambda_0} \right\}
	$$
such that $i_1\leq i_2\leq \ldots \leq i_r< 0$ and $i_p \equiv \pm 1\mod 6$ for each $1 \leq p \leq r$ and such that $j_1\leq j_2\leq \ldots \leq j_v< 0$. 

As in \cite{KP} (see also 
\cite{G}), for a quasi-particle monomial $b$ of the form
\beq \label{qpm}
b= X^{(1)}(j_1)\cdots X^{(1)}(j_s)\,
X^{(2)}(k_1)\cdots X^{(2)}(k_t),
\eeq
we define its {\em color-type} as $(s,t)$; {\em (total) charge} as $s+2t$; {\em degree-type} as $(j_1,\cdots ,j_s;\,
k_1,\cdots ,k_t)$ and {\em (total) degree} as $j_1 + \cdots +j_s+ 
k_1+\cdots +k_t$.

On the set of quasi-particle monomials \eqref{qpm} we define a linear order "$\leq$'' as follows. For quasi-particle monomials $b,b'$ as in \eqref{qpm}, we will write 
$b\leq b'$
if one of the following conditions holds:
\begin{itemize}
	\item  total charge of $b$ is greater than total charge of $b'$;
	\item total charges of $b$ and $b'$ are the same and color-type of $b$ is less than color-type of $b'$ with respect to the reverse lexicographic order;
	\item   color-types of $b$ and $b'$ are the same and degree-type of $b$ is less than degree-type of $b'$ with respect to the reverse lexicographic order.
	\end{itemize}
	Finally, let us extend the linear order to the set of all monomials
	\beq\label{monnomials}
	a\,b=\underbrace{
	\alpha(i_1)\cdots \alpha(i_r)}_{a}\, 
\underbrace{X^{(1)}(j_1)\cdots X^{(1)}(j_s)\,
X^{(2)}(k_1)\cdots X^{(2)}(k_t)}_{b}.
\eeq
	First, define the {\em degree-type} of $a=\alpha(i_1)\cdots \alpha(i_r)$ as
	$(i_1,\ldots ,i_r)$. Then, for monomials $ab$ and $a'b'$ as in \eqref{monnomials},  we write
	\beq\label{lin_ord}
	a\,b\leq a'\, b'
	\eeq
	if $b\leq b'$ or $b=b'$ and degree-type of $a$ is less than degree-type of $a'$ with respect to the reverse lexicographic order.
Also, we generalize the notion of (total) charge to all monomials $ab$ of the form \eqref{monnomials} by defining it to be the (total) charge of $b$, which is $s+2t$. Hence, the generators of the Heisenberg subalgebra do not affect the total charge. 

The next theorem is the main result of this paper.
\begin{thm}\label{main_thm}
The monomial vectors
\beq\label{monomial_vectors}
\alpha(i_1)\cdots \alpha(i_r)\, 
X^{(1)}(j_1)\cdots X^{(1)}(j_s)\,
X^{(2)}(k_1)\cdots X^{(2)}(k_t)\, v_{3\Lambda_0}
\eeq
such that
\begin{align}
&i_1\leq i_2\leq \ldots \leq i_r< 0,\quad i_p \equiv \pm 1\mod 6,\label{1_cond}\\
&j_1\leq j_2\leq \ldots \leq j_s< 0,\label{2_cond}\\
&k_1\leq k_2\leq \ldots \leq k_t< 0,\quad k_p \equiv 0\mod 3,\label{3_cond}\\
& j_p\leq j_{p+1}-4,\quad k_p\leq k_{p+1}-12,\label{diff_cond}\\
& j_s\leq -2-6t,\quad k_t\leq -6\label{init_cond}
\end{align}
form a basis of the standard module $L(3\Lambda_0)$.
\end{thm}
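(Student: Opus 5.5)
The proof has two halves, following the scheme of \cite{KP} and \cite{G}. First we show that the vectors \eqref{monomial_vectors} span $L(3\Lambda_0)$. Then we show they are linearly independent by comparing characters with the Kanade--Russell--Kur\c sung\"oz series.

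\textbf{Spanning.} We start from the PBW spanning set described before the theorem, namely the vectors $\alpha(i_1)\cdots\alpha(i_r)X^{(1)}(j_1)\cdots X^{(1)}(j_v)v_{3\Lambda_0}$, and rewrite them in quasi-particle form. Whenever two charge-one quasi-particles $X^{(1)}(j)X^{(1)}(j')$ have energies too close together, we replace the pair by a combination of $X^{(2)}$'s and higher monomials.

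The relations needed come from the level-three structure. The key ones are:
\begin{itemize}
\item[(i)] annihilating relations on $L(3\Lambda_0)$, of the form $X(\alpha,\beta,\delta;\zeta)=0$ for suitable triples of charge-one fields, or equivalently a vanishing of the charge-four quasi-particle. These follow because the level is $3$ and $x_{\theta}(z)^4$-type products vanish. Via \eqref{xnuzeta} and the rotation $\nu$, they are relations among products of $X^{(1)}$ fields with rational coefficients.
\item[(ii)] expansions of the products $X^{(1)}(\zeta_1)X^{(1)}(\zeta_2)$ and $X^{(2)}(\zeta_1)X^{(2)}(\zeta_2)$ near $\zeta_1=\zeta_2$, multiplied by $P$-polynomials as in \eqref{2e1_povi}. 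Their Taylor coefficients give relations expressing a leading monomial as a combination of monomials that are larger in the order \eqref{lin_ord} or that lie in a lower filtration piece $L(3\Lambda_0)_{(j)}$.
\item[(iii)] relations between $X^{(1)}$ and $X^{(2)}$, obtained from Lemma~\ref{implem} and the limit \eqref{limit}.
\end{itemize}

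The conditions are then derived as follows. The congruence $k\equiv 0 \bmod 3$ should come from Lemma~\ref{implem}(a) together with the $\nu$-equivariance of $X(\alpha,\beta;\zeta)$: the factor $1+\omega^{-n}$ and the symmetry under $\nu^2$ kill the other components. The difference conditions $j_p\le j_{p+1}-4$ and $k_p\le k_{p+1}-12$ come from counting how many leading terms of the relations in (ii) are independent. The initial conditions $j_s\le -2-6t$ and $k_t\le -6$ come from the actions of $X^{(1)}$ and $X^{(2)}$ on $v_{3\Lambda_0}$ via $E^{\pm}$. For this we use Lemma~\ref{implem}(b),(c) to move $E^+$ to the right, where it acts trivially; the factor $(1-\omega^{-p}\zeta_1/\zeta_2)$ produced by each $X^{(2)}$ shifts the allowed energies of $X^{(1)}$ by $6$. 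With these relations in hand, an induction on the linear order \eqref{lin_ord}, using \eqref{53}, \eqref{54} and the filtration \eqref{50}, reduces every monomial to a combination of monomials satisfying \eqref{1_cond}--\eqref{init_cond}.

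\textbf{Linear independence.} We compute the generating function of the spanning set. For fixed color-type $(s,t)$, the difference and initial conditions give
\[
\frac{q^{2s^2+6st+6t^2}}{(q;q)_s(q^3;q^3)_t}
\]
with an appropriate grading, where $q$ records $-\tfrac16$ of the energy, after normalization. Summing over $(s,t)$ and multiplying by $1/(q,q^5;q^6)$, which accounts for the Heisenberg part \eqref{1_cond}, gives exactly the right-hand side of the analytic Capparelli identity at $x=1$. The left-hand side is the principally specialized character of $L(3\Lambda_0)$. Since a spanning set whose graded count equals the graded dimension must be a basis, linear independence follows.

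The main obstacle is the spanning step. One has to find enough relations, with the correct leading terms, to force exactly the differences $4$ and $12$ and the coupled initial condition $j_s\le -2-6t$. Missing relations would produce a spanning set that is too large, and the character comparison would then fail. I would first check this by computing low-degree graded pieces explicitly against the character, to confirm that the leading-term combinatorics is correct before running the general induction.
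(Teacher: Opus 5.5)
Your overall scheme (start from the PBW spanning set, trade close pairs of $X^{(1)}$'s for $X^{(2)}$'s, induct on the order \eqref{lin_ord}, then compare the count with \eqref{Kurs}) is the paper's. The independence half is fine, with one small correction: $q^{-n}$ records energy $n$, and there is no factor $\tfrac16$.

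The gap is in the spanning half: the relations you list cannot produce the conditions of the theorem. Taylor coefficients of $P\,X^{(1)}(\zeta_1)X^{(1)}(\zeta_2)$ or $P\,X^{(2)}(\zeta_1)X^{(2)}(\zeta_2)$ become relations only once the limit is identified with something else. At level three a quasi-particle with colors in $\{\alpha,\beta\}$ vanishes from charge four on, since every charge-two one already vanishes on each level-one factor of \eqref{embedding}. It does not vanish in charge three: by \eqref{rel12_c}, $X(\alpha,\alpha,\beta;\zeta)$ is a nonzero multiple of $E^-(-\alpha;\zeta)X(\gamma;\zeta)E^+(-\alpha;\zeta)$.

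The proof instead rests on nonvanishing level-three identities. All of them are obtained from \eqref{embedding} and the level-one formula \eqref{XEE}, and none follows from Lemma \ref{implem} or \eqref{limit}:
\begin{enumerate}
\item \eqref{XAA2}, i.e.\ $X(\alpha,\alpha;\zeta)=\kappa E^-(-\alpha;\zeta)X^{(1)}(-\zeta)E^+(-\alpha;\zeta)$, which together with \eqref{XAB} gives the regular $2\times 2$ system of Lemma \ref{lemma_41}, hence the difference $4$;
\item \eqref{Rrel};
\item the charge-three identities \eqref{rel12_e} and \eqref{rel12_f}.
\end{enumerate}

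Two of your specific mechanisms are wrong.

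First, $k_p\equiv 0\bmod 3$ does not follow from Lemma \ref{implem}(a) and a $\nu^2$-symmetry. The components $X^{(2)}(n)$ with $n\not\equiv 0\bmod 3$ are nonzero. Also, $\nu^2$ sends $(\alpha,\beta)$ to $(\gamma,-\alpha)$, which only gives the tautology $X(\gamma,-\alpha;\zeta)=X^{(2)}(\omega^2\zeta)$. What is needed is the level-three identity $E^-(\alpha;\zeta)X^{(2)}(\zeta)=X^{(2)}(\omega^2\zeta)E^+(-\alpha;\zeta)$. Its $\zeta^n$-coefficient writes $(1-\omega^{2n})X^{(2)}(n)v$ as Heisenberg-dressed terms (Lemma \ref{lemma_42}). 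The same reduction, combined with the four derivative relations \eqref{XABAB2} of the vanishing charge-four quasi-particle, gives the difference $12$ (Lemmas \ref{lemma_44}, \ref{lemma_45}). Combined with $X^{(2)}(-3)v_{3\Lambda_0}\in L(3\Lambda_0)_{(1)}$, it gives $k_t\le -6$.

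Second, $j_s\le -2-6t$ cannot come from moving $E^+$ to the right. At level three $X^{(1)}(\zeta)$ is not of the form $E^-E^+$; only its summands on the individual tensor factors are. Moreover, Lemma \ref{implem}(b),(c) describe how $X^{(2)}$ commutes with Heisenberg exponentials, not with $X^{(1)}$. So they yield no relation that reduces $X^{(1)}(j)X^{(2)}(k_1)\cdots X^{(2)}(k_t)v_{3\Lambda_0}$ for $j>-2-6t$. In the paper this interaction comes from \eqref{rel12_e} and \eqref{rel12_f}. For each $N$ they eliminate two consecutive vectors $X^{(1)}(r)X^{(2)}(N-r)v$ with $N-r\in 3\mathbb{Z}$ (Lemma \ref{lemma_46}). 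Since the $X^{(2)}$-energies step by $3$, this accounts for the shift by $6$ per $X^{(2)}$.

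Without these identities your induction has nothing to run on, which is exactly the obstacle your last paragraph leaves open.
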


Note that by \eqref{diff_cond},  the degrees of two adjacent quasi-particles of charge one (resp. charge two) need to differ by at least $4$ (resp. $12$). We shall often refer to \eqref{diff_cond} as {\em difference conditions}. As for \eqref{init_cond}, it implies that the degrees of all quasi-particles of charge one (resp. charge two) are less than or equal to $-2$ (resp. $-6$). We refer to \eqref{init_cond} as {\em initial conditions}.

\section{Relations for quasi-particles}\label{section_04}

In this section, we study relations satisfied by the operators $X^{(1)}(\zeta)$ and  $X^{(2)}(\zeta)$ on $L(3\Lambda_0)$ and obtain some intermediate results  which  we use in the proof of Theorem \ref{main_thm}.

For the rest of the paper set
\beq\label{korijeni}
\alpha=\alpha_1,\quad \beta=\alpha_1+\alpha_2=\nu(\alpha)\quad\text{and}\quad \gamma=\alpha_2=\nu^2(\alpha).
\eeq

By the special case of \eqref{limit},  we have
\beq\label{XAA}
X(\alpha,\alpha;\zeta)
=
\lim_{\zeta_i\to \zeta} P_{\alpha,\alpha}(\zeta_1,\zeta_2) X^{(1)}(\zeta_1)X^{(1)}(\zeta_2).
\eeq
Combining the explicit formula \eqref{XEE} and the embedding 
\beq\label{embedding}
L(3\Lambda_0) \hookrightarrow L(\Lambda_0) \otimes L(\Lambda_0) \otimes L(\Lambda_0) ,
\eeq 
the expression in \eqref{XAA} can be written as
$$
\lim_{\zeta_i\to \zeta} P_{\alpha,\alpha}(\zeta_1,\zeta_2) X^{(1)}(\zeta_1)X^{(1)}(\zeta_2)
=\kappa\,
E^-(-\alpha;\zeta)\, X(-\alpha;\zeta)\,E^+(-\alpha;\zeta)
$$
for some nonzero  $\kappa\in\mathbb{C}$ (cf. \cite[Theorem 6.6]{MP}). As $ \nu^3(\alpha)=-\alpha$, we can use \eqref{xnuzeta} to write the above equality as
\beq\label{XAA2}
\lim_{\zeta_i\to \zeta} P_{\alpha,\alpha}(\zeta_1,\zeta_2) X^{(1)}(\zeta_1)X^{(1)}(\zeta_2)
=\kappa\,
E^-(-\alpha;\zeta)\, X^{(1)}( -\zeta)\,E^+(-\alpha;\zeta).
\eeq

Next, consider the following particular case of \eqref{limit}:
$$
\lim_{\zeta_i\to \zeta} P_{\alpha,\beta}(\zeta_1,\zeta_2) X^{(1)}(\zeta_1)X (\beta;\zeta_2)
=X^{(2)}(\zeta).
$$
As $\nu(\alpha)=\beta$, by employing \eqref{xnuzeta} once again, one can write the above identity  as
\beq\label{XAB}
\lim_{\zeta_i\to \zeta} P_{\alpha,\beta}(\zeta_1,\zeta_2) X^{(1)}(\zeta_1)X^{(1)} (\omega\zeta_2)
=X^{(2)}(\zeta).
\eeq 

\begin{lem}\label{lemma_41}
For any   $i,j\in\mathbb{Z}$   and   $v\in L(3\Lambda_0)$, the   vector
\beq\label{vec1}
X^{(1)}(i)X^{(1)}(j)v\quad\text{such that}\quad\left|i-j\right|<4
\eeq
  can be expressed in terms of the   vectors 
\begin{align}
& \alpha(i+j)\,v, \  X^{(1)}(i+j)\,v,  \ X^{(2)}(i+j)\,v,\label{vec2}\\
&X^{(1)}(a) X^{(1)}(b) v  \text{ such that } \left|a-b\right|\geq 4\text{ and }a+b =i+j,\label{vec3}\\\
&E^-(-\alpha;-a)X^{(1)}(b)v \text{ such that }  -a+b=i+j \text{ and } a>0,\label{vec4}\\
&X^{(1)}(b)E^+(-\alpha;a)v   \text{ such that }    a+b=i+j \text{ and } a>0,\label{vec5}\\
&E^-(-\alpha;-a_1)X^{(1)}(b)E^+(-\alpha;a_2)v   \text{ such that }     a_2-a_1+b=i+j \text{ and } a_1,a_2>0.\label{vec6}
\end{align}
\end{lem}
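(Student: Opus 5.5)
The plan is to extract relations from the two limit formulas \eqref{XAA2} and \eqref{XAB} together with the commutator \eqref{e220_label}, and then to compare coefficients of $\zeta_1^{i}\zeta_2^{j}$ for a fixed total degree $n=i+j$.

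First, I will expand the generating series
$$P_{\alpha,\alpha}\Big(\frac{\zeta_1}{\zeta_2}\Big)X^{(1)}(\zeta_1)X^{(1)}(\zeta_2).$$
The polynomial $P_{\alpha,\alpha}(x)$ has degree $4$ with coefficients $c_0=1,c_1,\ldots,c_4$. The coefficient of $\zeta_1^{i}\zeta_2^{j}$ in this product is therefore $\sum_{k=0}^{4}c_k X^{(1)}(i-k)X^{(1)}(j+k)$.

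Second, by \eqref{e220_label}, the product $X^{(1)}(\zeta_1)X^{(1)}(\zeta_2)$ differs from its reordering by delta-function terms. These involve $X(\nu^p\alpha+\alpha;\zeta_2)$, i.e. $X^{(1)}$ at a rescaled argument, together with $\alpha(\zeta_2)$ and central terms. Multiplying by $P_{\alpha,\alpha}$ kills only some of these terms. I will handle the remaining ones as follows.

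1. I will not set $\zeta_1=\zeta_2$ directly. Instead I will substitute $\zeta_1=\zeta x$, $\zeta_2=\zeta$.
2. By the existence of the limit \eqref{limit}, the function $P_{\alpha,\alpha}(x)X^{(1)}(x\zeta)X^{(1)}(\zeta)$ is a series in $\zeta$ that is polynomial in $x$ on each vector, so it can be Taylor expanded at $x=1$.
3. The constant term at $x=1$ is given by \eqref{XAA2}. Extracting the coefficient of $\zeta^{n}$ there gives exactly the vectors \eqref{vec4}, \eqref{vec5}, \eqref{vec6}, plus $X^{(1)}(n)v$ when all $E^\pm$-indices are zero.
4. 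Using \eqref{XAB} in the same way with $X^{(1)}(\omega\zeta_2)$ yields a relation whose constant term is $X^{(2)}(n)v$.
5. The factor $\alpha(n)v$ arises from the commutator term $\beta(\zeta_2)\zeta_2^{-6}\delta$ in \eqref{e220_label}.

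Third, I will obtain a triangular system. Fixing $n$, I set $u_m=X^{(1)}(m)X^{(1)}(n-m)v$. The two relations (from $\alpha,\alpha$ and $\alpha,\beta$), together with the commutator relation $u_m-u'_{n-m}\in\mathrm{span}\{\alpha(n)v,X^{(1)}(n)v\}$ obtained by swapping the order, give linear combinations of the $u_m$ with $|2m-n|<4$ expressed through \eqref{vec2}--\eqref{vec6} and the $u_m$ with $|2m-n|\ge4$.

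To capture more than the constant term, I will also apply $D_x$ derivatives at $x=1$, up to order three, to the $\alpha,\alpha$ expression. This yields the relations
$$\sum_k c_k\,k^\ell\,u_{i-k}\equiv(\text{terms of types }\eqref{vec2}\text{--}\eqref{vec6}),\qquad \ell=0,1,2,3.$$
Each derivative of the right-hand side produces $\alpha(\cdot)$ insertions coming from $E^\pm$. These again land in \eqref{vec4}--\eqref{vec6} or, after commuting, in \eqref{vec2}.

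For each $n$ there are at most seven unknown pairs with $|i-j|<4$. These reduce to about four or five up to symmetry of order modulo lower terms. I will show the coefficient matrix, a Vandermonde-type matrix in the $k^\ell$ weighted by the $c_k$, combined with the relation from \eqref{XAB}, is invertible.

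The main obstacle is this last step. I must verify that enough independent relations exist to solve for every $u_m$ with $|2m-n|<4$, and the argument must treat the parity of $n$ and the root-of-unity coefficients carefully. I would first try to write the matrix explicitly for $n$ even and $n$ odd and check that its determinant is nonzero numerically using the explicit $P_{\alpha,\alpha}$ and $P_{\alpha,\beta}$.
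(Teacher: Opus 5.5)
\textbf{There is a genuine gap.} The first half of your third step is exactly the paper's proof. You take the $\zeta^{n}$-coefficients ($n=i+j$) of \eqref{XAA2} and \eqref{XAB}, move the vectors \eqref{vec3} to the right, and use \eqref{e220_label} to identify $X^{(1)}(a)X^{(1)}(b)v$ with $X^{(1)}(b)X^{(1)}(a)v$ modulo $\alpha(n)v$ and $X^{(1)}(n)v$. The problem is that you miscount the unknowns, so you never carry out the one computation the lemma rests on. Since $i+j=n$ is fixed, $i-j\equiv n \pmod 2$. So $|i-j|<4$ leaves only $i-j\in\{0,\pm2\}$ for $n=2m$ and $i-j\in\{\pm1,\pm3\}$ for $n=2m+1$. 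Modulo the commutator there are exactly \emph{two} unknowns, not ``four or five'', and the two relations you already have suffice.

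Concretely, put $u_m=X^{(1)}(m)X^{(1)}(n-m)v$. Each $u_m$ with $|2m-n|<4$ enters the $\zeta^n$-coefficient of \eqref{XAA2} with coefficient $P_{\alpha,\alpha}(1)$, and that of \eqref{XAB} with coefficient $P_{\alpha,\beta}(1)\,\omega^{n-m}$. For $n=2m$ the unknowns are $u_{m-1}\equiv u_{m+1}$ and $u_m$, and the two rows are proportional to $(2,1)$ and $(\omega+\omega^{-1},1)=(1,1)$. For $n=2m+1$ the unknowns are $u_{m-1}\equiv u_{m+2}$ and $u_m\equiv u_{m+1}$, and the rows are proportional to $(2,2)$ and $(\omega^{2}+\omega^{-1},1+\omega)=(0,1+\omega)$. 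Both determinants are nonzero. This is the regular $2\times 2$ matrix of the paper and the real content of the lemma, so it cannot be left as the ``main obstacle''.

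The extra ingredient you bring in for the phantom unknowns would not work. Identity \eqref{XAA2} determines only the value at $x=1$ of $G(x)=P_{\alpha,\alpha}(x)X^{(1)}(x\zeta)X^{(1)}(\zeta)$. Its higher Taylor coefficients at $x=1$ are not obtained by differentiating the right-hand side of \eqref{XAA2}, which does not depend on $x$. Compute them via \eqref{embedding}, writing $X^{(1)}=\sum_{i=1}^{3}X_i$ with $X_i$, $E_i^{\pm}$ acting on the $i$-th factor.
\begin{itemize}
\item The odd-order coefficients give nothing new. Locality together with $P_{\alpha,\alpha}(x^{-1})=x^{-4}P_{\alpha,\alpha}(x)$ yields $G_n(x^{-1})=x^{-n-4}G_n(x)$ for the $\zeta^n$-coefficient $G_n$ of $G$.
\item The second-order coefficient contains $\sum_i E_i^-(-2\alpha;\zeta)E_i^+(-2\alpha;\zeta)$. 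This comes from the diagonal terms, which carry the scalar factor $(1-x)^2(1-\omega^{-1}x)(1-\omega^{-5}x)$, vanishing only to second order at $x=1$.
\item It also contains $\sum_{i\neq j}DX_i(\zeta)\,DX_j(\zeta)$.
\end{itemize}
These terms are built from the Heisenberg fields of the individual tensor factors, not from the diagonal $\alpha(\cdot)$. So nothing supports your claim that they land in \eqref{vec4}--\eqref{vec6}. Such relations merely rewrite combinations of the $u_m$ in terms of other charge-two fields.

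Also, differentiation does not produce $\sum_k c_k k^{\ell}u_{i-k}$. The coefficient of $u_m$ would be $\sum_k c_k(m+k)^{\ell}$, with all $m$ appearing. Discard the derivative relations and the argument reduces to the paper's.
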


\begin{proof}
The lemma follows from the quasi-particle relations \eqref{XAA2} and  \eqref{XAB}. More specifically, by extracting the coefficients of $\zeta^{i+j}$ on their left-hand sides (resp. right-hand sides), one obtains linear combinations of the vectors \eqref{vec1} and \eqref{vec3} (resp. \eqref{vec2} and \eqref{vec4}--\eqref{vec6}). Hence, moving the vectors \eqref{vec3} to the right-hand sides, one expresses two linear combinations of the vectors \eqref{vec1} in terms of \eqref{vec2}--\eqref{vec6}. Due to \eqref{e220_label}, this can be regarded as a system of two linear equations in two indeterminates,
$$
X^{(1)}(i)X^{(1)}(j)v\quad\text{such that}\quad\left|i-j\right|<4\quad\text{and}\quad i\leq j.
$$
 By a straightforward computation, one checks that the matrix of the system  is regular, which implies the lemma.
\end{proof}

Recall that the roots $\alpha,\,\beta$ and $\gamma$ are given by \eqref{korijeni}.
By using   \eqref{XEE} and the embedding \eqref{embedding}, we find that
$$
E^-(\alpha;\zeta)X(\alpha,\beta;\zeta)
=  X(-\alpha,\gamma;\zeta)E^+(-\alpha;\zeta)   
$$
on $L(3\Lambda_0)$ (cf. \cite[Theorem 13]{Capp2}).
Due to $\nu^2(\beta)=-\alpha$, 
 $\nu^2(\alpha)=\gamma$ and \eqref{xnuzeta}, this yields 
\beq\label{Rrel}
R(\zeta)\coloneqq
\sum_{n\in\mathbb{Z}} R(n)\,\zeta^n\coloneqq
E^-(\alpha;\zeta)X^{(2)}( \zeta)
-X^{(2)}(\omega^2\zeta)E^+(-\alpha;\zeta)=0.
\eeq

\begin{lem}\label{lemma_42}
For any   $n\in\mathbb{Z}$ such that $n\not\equiv 0\mod 3$ and   $v\in L(3\Lambda_0)$, the   vector $X^{(2)}(n)v$ can be expressed in terms of the vectors
$$
E^-(\alpha;-i)X^{(2)}(n+i)v \quad\text{and}\quad X^{(2)}(n-i) E^+(\alpha;i)v \quad\text{with}\quad i>0.
$$
\end{lem}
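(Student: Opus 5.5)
We will extract the coefficient of $\zeta^n$ from the relation \eqref{Rrel}, $R(\zeta)=0$, and apply it to $v$. Write $E^-(\alpha;\zeta)=\sum_{i\geq 0}E^-(\alpha;-i)\zeta^{-i}$ and $E^+(-\alpha;\zeta)=\sum_{i\geq 0}E^+(-\alpha;i)\zeta^{i}$, with $E^\pm(\cdot\,;0)=1$ by \eqref{e14}. Using $X^{(2)}(\omega^2\zeta)=\sum_m X^{(2)}(m)\omega^{2m}\zeta^m$, the coefficient of $\zeta^n$ in $R(\zeta)v$ is
\begin{equation*}
0=R(n)v=X^{(2)}(n)v+\sum_{i>0}E^-(\alpha;-i)X^{(2)}(n+i)v-\omega^{2n}X^{(2)}(n)v-\sum_{i>0}\omega^{2(n-i)}X^{(2)}(n-i)E^+(-\alpha;i)v .
\end{equation*}
Collecting the two $i=0$ terms gives
\begin{equation*}
(1-\omega^{2n})X^{(2)}(n)v=-\sum_{i>0}E^-(\alpha;-i)X^{(2)}(n+i)v+\sum_{i>0}\omega^{2(n-i)}X^{(2)}(n-i)E^+(-\alpha;i)v .
\end{equation*}

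Since $\omega$ is a primitive sixth root of unity, $\omega^2$ is a primitive cube root of unity. Hence $1-\omega^{2n}\neq 0$ exactly when $n\not\equiv 0 \mod 3$. Dividing by this scalar expresses $X^{(2)}(n)v$ in terms of vectors of the required shape.

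Two points need checking. First, both sums are finite on $v$. The second sum truncates because $E^+(-\alpha;i)v=0$ for $i$ large, as $v$ has bounded degree. The first sum truncates because $X^{(2)}(\zeta)v\in L(3\Lambda_0)((\zeta))$, so only finitely many $X^{(2)}(n+i)v$ with $i>0$ are nonzero; equivalently, the sum is finite in each $d$-graded component.

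Second, the plus sign in the $E^+$ operator. Relation \eqref{Rrel} contains $E^+(-\alpha;\zeta)$, whereas the lemma is stated with $E^+(\alpha;i)$. I would read this as a harmless notational variant: either it is a typo for $E^+(-\alpha;i)$, or one uses that the coefficients of $E^+(-\alpha;\zeta)$ are polynomials in the $\alpha(m)$, $m>0$, as are those of $E^+(\alpha;\zeta)$, so the two sets of vectors span the same space. Only this interpretive point about signs needs care; the rest of the argument is direct coefficient extraction.
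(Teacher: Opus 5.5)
Your proof is correct and is the same as the paper's: take the coefficient of $\zeta^n$ in $R(\zeta)v=0$ and divide by $1-\omega^{2n}$, which is nonzero exactly when $n\not\equiv 0 \mod 3$. Two small points need tightening:
\begin{itemize}
\item \emph{The sign.} The ``both are polynomials in $\alpha(m)$'' remark would not by itself show that the two families span the same space. The clean reason is \eqref{e14} with $p=3$: since $\nu^3\alpha=-\alpha$, one gets $E^+(-\alpha;i)=(-1)^iE^+(\alpha;i)$.
\item \emph{Truncation of the $E^-$-sum.} What stops $\sum_{i>0}E^-(\alpha;-i)X^{(2)}(n+i)v$ is that $X^{(2)}(m)v=0$ for $m\gg 0$, i.e.\ $X^{(2)}(\zeta)v$ has finitely many \emph{positive} powers of $\zeta$. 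Finitely many negative powers, which is what $L(3\Lambda_0)((\zeta))$ literally says, gives the opposite direction.
\end{itemize}
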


\begin{proof}
The lemma is an immediate consequence of \eqref{Rrel}. Indeed, by applying this relation on a vector $v\in L(3\Lambda_0)$ and then extracting the coefficients of   $\zeta^n$, we get
\begin{align*}
\left(1-\omega^{2n}\right)X^{(2)}(n)v+\sum_{i>0} E^-(\alpha;-i)X^{(2)}(n+i)v
-\sum_{i>0} \omega^{2(n-i)}X^{(2)}(n-i) E^+(\alpha;i)v=0
\end{align*}
since $R(n)v=0$ for all $n$. 
Dividing the equality by $1-\omega^{2n}$, which is nonzero for $n\not\equiv 0\mod 3$, we obtain the required decomposition of $X^{(2)}(n)v$.
\end{proof}

By the commutation relation \eqref{e220_label}, we have
$$
P_{\alpha,\beta}(\zeta_1,\zeta_2)\left[ X(\alpha ; \zeta_1), X(\beta;\zeta_2)\right]  = 0 \quad\text{for any }\alpha,\beta\in\Phi,
$$
as the polynomial $P_{\alpha,\beta}(\zeta_1,\zeta_2)$ annihilates the delta functions which appear on the right-hand side of \eqref{e220_label}. Let $n $ and $m$ be   nonnegative integers. Analogously, applying   $D_{\zeta_1}^n D_{\zeta_2}^m$ to \eqref{e220_label} and then multiplying the resulting identity by $P_{\alpha,\beta}(\zeta_1,\zeta_2)^{n+m+1}$, we obtain
\beq\label{rl43}
P_{\alpha,\beta}(\zeta_1,\zeta_2)^{n+m+1}\left[D_{\zeta_1}^n X(\alpha ; \zeta_1),D_{\zeta_2}^m X(\beta;\zeta_2)\right]  = 0 \quad\text{for any }\alpha,\beta\in\Phi, \ n,m\geq 0.
\eeq

\begin{lem}
Fix   positive integers $r$ and $s$ and roots $\gamma_i,\delta_j\in\left\{\alpha,\beta\right\}$, where $i=1,\ldots ,r$ and $j=1,\ldots ,s$. For any nonnegative integers $n_1,\ldots ,n_r$, there exists a polynomial 
$P(\zeta_1,\ldots ,\zeta_r,\xi_1,\ldots,\xi_s)$ such that $P(1,\ldots ,1)\neq 0$ and such that there exists a limit
\beq\label{limex}
\lim_{\zeta_i,\xi_j\to\chi}
P(\zeta_1,\ldots ,\zeta_r,\xi_1,\ldots,\xi_s)
\left(\prod_{i=1}^r  D_{\zeta_i}^{n_i} X(\gamma_i;\zeta_i)\right)
\left(\prod_{k=1}^s    X(\delta_k;\xi_k)\right).
\eeq
\end{lem}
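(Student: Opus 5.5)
The natural choice is the product of all pairwise polynomials, each raised to a power large enough to absorb the derivatives. Set
$$
P(\zeta_1,\ldots,\zeta_r,\xi_1,\ldots,\xi_s)=\prod_{1\leq i<i'\leq r}P_{\gamma_i,\gamma_{i'}}\Bigl(\frac{\zeta_i}{\zeta_{i'}}\Bigr)^{n_i+n_{i'}+1}\prod_{i=1}^r\prod_{k=1}^s P_{\gamma_i,\delta_k}\Bigl(\frac{\zeta_i}{\xi_k}\Bigr)^{n_i+1}\prod_{1\leq k<k'\leq s}P_{\delta_k,\delta_{k'}}\Bigl(\frac{\xi_k}{\xi_{k'}}\Bigr).
$$
This is a polynomial once we multiply by a suitable monomial in the variables, which changes neither the limit's existence nor the nonvanishing at $1$. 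Since $\gamma_i,\delta_k\in\{\alpha,\beta\}$ and $\langle\alpha,\beta\rangle\geq 0$, every factor is one of $P_{\alpha,\alpha}$, $P_{\beta,\beta}$, $P_{\alpha,\beta}$ or $P_{\beta,\alpha}$. By the Example in Section \ref{section_02} each of these is nonzero at $1$, so $P(1,\ldots,1)\neq0$.

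Next I would show that $P$ times the operator product is symmetric under reordering of the factors, exactly as in the argument preceding \eqref{limit}. For any two adjacent factors I use \eqref{rl43}. For two factors $D^{n_i}_{\zeta_i}X(\gamma_i;\zeta_i)$ and $D^{n_{i'}}_{\zeta_{i'}}X(\gamma_{i'};\zeta_{i'})$, \eqref{rl43} with exponent $n_i+n_{i'}+1$ kills the commutator. For a pair $D^{n_i}X(\gamma_i;\zeta_i)$ and $X(\delta_k;\xi_k)$, \eqref{rl43} with $m=0$ and exponent $n_i+1$ does the same. For a pair of undifferentiated factors, the exponent $1$ suffices, as already noted in the text.

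One point needs care here. The rest of $P$ consists of polynomials, which commute with the operators, but $P$ contains the $\zeta$'s, while $D_\zeta$ acts only inside its own factor. This causes no trouble, because relation \eqref{rl43} is an identity of formal series in which multiplication by a polynomial in $\zeta_i,\xi_k$ is always allowed. So for every permutation of the factors,
$$
P\cdot(\text{product in the given order})=P\cdot(\text{product in the permuted order}).
$$

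Each ordered product, applied to a vector, lies in $L(3\Lambda_0)((\cdot))\cdots((\cdot))$ in the corresponding order of variables, by \eqref{xzeta_label}; applying $D$ preserves this. Intersecting over all permutations, as in Section \ref{section_02}, shows that the expression belongs to $\text{Hom}(L(3\Lambda_0),L(3\Lambda_0)((\zeta_1,\ldots,\zeta_r,\xi_1,\ldots,\xi_s)))$. Hence the substitution of all variables by $\chi$ is well defined, and the limit \eqref{limex} exists.

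The main obstacle is justifying the commutation step. One must check that multiplying \eqref{rl43} by the extra factors of $P$ stays in a space where products of formal series are defined, and that the exponent $n+m+1$ really annihilates $D^n D^m$ applied to the delta-function terms of \eqref{e220_label}. The second point is the standard identity $(1-x)^{k+1}D^k\delta(x)=0$ applied to each factor $1-\omega^{-p}\zeta_1/\zeta_2$. The first point holds because multiplication by a polynomial is always defined on formal distributions.
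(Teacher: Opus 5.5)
Your proposal is correct and follows the paper's proof. You choose exactly the same polynomial $P$, with pairwise factors raised to the exponents $n_i+n_{i'}+1$, $n_i+1$ and $1$; you deduce existence of the limit from \eqref{rl43}, and you get $P(1,\ldots,1)\neq 0$ because all roots lie in $\{\alpha,\beta\}$. The only difference is that you spell out the permutation-symmetry and intersection argument from Section 2, which the paper compresses into ``evident from \eqref{rl43}''.
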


\begin{proof}
It is evident from \eqref{rl43} that the limit in \eqref{limex} exists for 
\begin{align*}
&P(\zeta_1,\ldots ,\zeta_r,\xi_1,\ldots,\xi_s)\\
=&\prod_{1\leq i<j\leq r} P_{\gamma_i,\gamma_j}(\zeta_i,\zeta_j)^{n_i +n_j +1}
\prod_{i=1}^r\prod_{k=1}^s P_{\gamma_i,\delta_k}(\zeta_i,\xi_k)^{n_i  +1}
 \prod_{1\leq k<l\leq s}  P_{\delta_k,\delta_l}(\xi_k,\xi_l) .
\end{align*}
Finally, the assertion $P(1,\ldots ,1)\neq 0$   follows directly from \eqref{2e1_povi}, \eqref{2e2_povi} and the lemma assumption that all
$\gamma_i$ and $\delta_j$ belong to $\left\{\alpha,\beta\right\}$.
\end{proof}

By the commutation relation \eqref{e220_label},   there exists a positive integer $N$ such that for all  $i_1,i_2,j_1,j_2\geq 0$ satisfying $   i_1+i_2+ j_1+j_2 \leq 3$, the limits
\beq\label{XABAB3}
\lim_{\zeta_1,\zeta_2\to\zeta}
 D_{\zeta_1}^{i_1} D_{\zeta_2}^{j_1} \left(P_{\alpha,\beta,\alpha,\beta}(\zeta_1,\zeta_1,\zeta_2,\zeta_2)^N\right)
D_{\zeta_1}^{i_2}\left(X^{(2)}(\zeta_1)\right) 
D_{\zeta_2}^{j_2}\left(X^{(2)}(\zeta_2)\right)
\eeq
exist. 
Note that 
\beq\label{XABAB}
X(\alpha,\beta,\alpha,\beta;\zeta)
=\kappa\lim_{\zeta_1,\zeta_2\to\zeta}
P_{\alpha,\beta,\alpha,\beta}(\zeta_1,\zeta_1,\zeta_2,\zeta_2)^N
X^{(2)}(\zeta_1) X^{(2)}(\zeta_2)
\eeq
for some nonzero $\kappa\in\mathbb{C}$. 
By applying $D_\zeta^m$ with $m=0,1,2,3$ to \eqref{XABAB}, we obtain four quasi-particle relations, 
\begin{align}
&D_\zeta^m X(\alpha,\beta,\alpha,\beta;\zeta)\nonumber\\
= &\, \kappa\lim_{\zeta_1,\zeta_2\to\zeta}
\left(D_{\zeta_1}+D_{\zeta_2}\right)^m \left(P_{\alpha,\beta,\alpha,\beta}(\zeta_1,\zeta_1,\zeta_2,\zeta_2)^N
X^{(2)}(\zeta_1) X^{(2)}(\zeta_2)\right),\,\,\label{XABAB2}
m=0,1,2,3.
\end{align}

 	In the next lemmas, we will need the notion of (total) charge introduced in Section \ref{section_03}.
More specifically, for any $v\in L(3\Lambda_0)$, we shall say that the (total) charge of $abv$ is {\em (strictly) smaller} than the total charge of $a' b' v$, where  $ab$ and $a'b'$ are as in \eqref{monnomials}, if the total charge of $ab $ is (strictly)  smaller  than the (total) charge of $a' b' $.

\begin{lem}\label{lemma_44}
For any   $n,j\in\mathbb{Z}$   and   $v\in L(3\Lambda_0)$, the   vectors
\begin{gather*}
X^{(2)}(j)X^{(2)}(n-j)v,\,\,
X^{(2)}(j+1)X^{(2)}(n-j-1)v,\\
X^{(2)}(j+2)X^{(2)}(n-j-2)v,\,\,
X^{(2)}(j+3)X^{(2)}(n-j-3)v 
\end{gather*}
can be expressed in terms of the vectors
$$
X^{(2)}(i)X^{(2)}(n-i)v\quad\text{with }
i\in\mathbb{Z}\setminus\left\{j,j+1,j+2,j+3\right\}
$$
and the vectors of   strictly smaller total charge. 
\end{lem}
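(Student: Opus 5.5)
The plan is to use the four quasi-particle relations \eqref{XABAB2}, $m=0,1,2,3$, in the same way that \eqref{XAA2} and \eqref{XAB} were used in the proof of Lemma \ref{lemma_41}.

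\emph{Step 1 (right-hand sides).} Apply each relation \eqref{XABAB2} to $v$ and take the coefficient of $\zeta^{n}$. The left-hand side $D_\zeta^m X(\alpha,\beta,\alpha,\beta;\zeta)$ gives $n^m X(\alpha,\beta,\alpha,\beta;n)v$. Using the embedding \eqref{embedding}, in the spirit of \eqref{XAA2} and \cite[Theorem 6.6]{MP}, I would show that this is a vector of strictly smaller total charge. Indeed, $2\alpha+2\beta=2(\alpha+\beta)$ has norm $8$, and on a level three module $L(\Lambda_0)^{\otimes 3}$ a product of four vertex operators of total weight $2(\alpha+\beta)$ collapses to an expression of the form
\[
E^-(\cdot;\zeta)\,X(\cdot;\zeta)\cdots E^+(\cdot;\zeta),
\]
involving at most one or two charged factors. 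Such factors correspond to charge at most $3<4$. Expanding them, one obtains Heisenberg operators times quasi-particle monomials of total charge less than $4$, with the $\alpha(\cdot)$ factors not contributing to the charge. If this collapse does not hold directly, I would fall back on the filtration \eqref{50}: it suffices that the right-hand side lie in $L(3\Lambda_0)_{(3)}$ modulo the span of the listed vectors.

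\emph{Step 2 (the linear system).} On the right-hand sides I expand $(D_{\zeta_1}+D_{\zeta_2})^m$ by the Leibniz rule. The derivatives then fall either on the polynomial $P^N:=P_{\alpha,\beta,\alpha,\beta}(\zeta_1,\zeta_1,\zeta_2,\zeta_2)^N$ or on the operators $X^{(2)}(\zeta_i)$, and the limits \eqref{XABAB3} exist. Taking the coefficient of $\zeta^n$ gives, for each $m$, a relation
\[
\sum_{i\in\mathbb{Z}} c_m(i)\,X^{(2)}(i)X^{(2)}(n-i)v \in \text{(strictly smaller charge)},
\]
where $c_m(i)=\sum_{k} p_k\,\bigl((i-k)+(n-i-k')\cdots\bigr)$ comes from the coefficients $p_k$ of $P^N$, weighted by polynomials in the energies. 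It is convenient to write $P^N$ in the variable $x=\zeta_1/\zeta_2$. Then the $m$-th relation reads
\[
\sum_{l} p_l\, g_m(l,i)\, X^{(2)}(i-l)X^{(2)}(n-i+l)v ,
\]
with $g_m(l,i)$ a polynomial of degree $m$ in the shift and in $i$. Moving every term with index outside $\{j,j+1,j+2,j+3\}$ to the other side leaves a $4\times4$ linear system in the four unknowns of the statement.

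\emph{Step 3 (regularity) — the main obstacle.} The remaining task is to prove that the $4\times4$ coefficient matrix is nonsingular. The approach I would take is to use the filtration and \eqref{54} to commute $X^{(2)}$'s modulo lower charge, so that the unknowns may be treated as symmetric in their indices. The $m$-th row is then essentially the coefficient extraction of $D^m$ applied to a single series with fixed polynomial prefactor. Its entries are $\sum_l p_l\,(\text{degree-}m\text{ polynomial in } l)$ evaluated at four consecutive shifts. Row reduction turns the matrix into a product of a Vandermonde-type matrix in $j,\dots,j+3$ with a triangular matrix whose diagonal entries are nonzero multiples of $P^N(1)$, which is nonzero because $P_{\alpha,\alpha}(1)\ne0$ and $P_{\alpha,\beta}(1)\ne0$. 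The determinant is therefore nonzero, and solving the system gives the claim.

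If this structural argument fails, the fallback is a direct computation in the spirit of "one checks that the matrix is regular" in Lemma \ref{lemma_41}, using explicit coefficients of $P_{\alpha,\beta,\alpha,\beta}$.
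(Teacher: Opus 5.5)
Your outline is the same as the paper's sketch: the four relations \eqref{XABAB2}, extraction of the coefficient of $\zeta^n$, and a $4\times 4$ system. However, the regularity claim in Step 3 is false, and this is a genuine gap.

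The operator $D_{\zeta_1}+D_{\zeta_2}$ multiplies $\zeta_1^{a}\zeta_2^{b}$ by $a+b$. It also annihilates every function of $\zeta_1/\zeta_2$, in particular the shifts coming from $P^N$. So the coefficient of $\zeta^n$ in the $m$-th relation of \eqref{XABAB2} is exactly $n^m$ times the coefficient of $\zeta^n$ in the relation with $m=0$. Concretely, after your Leibniz expansion into the limits \eqref{XABAB3}, the weights $l^{i_1}(-l)^{j_1}(a-l)^{i_2}(b+l)^{j_2}$ attached to $p_l\,X^{(2)}(a-l)X^{(2)}(b+l)v$ recombine to $(a+b)^m=n^m$. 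Hence:
\begin{itemize}
\item your $g_m(l,i)$ is the constant $n^m$;
\item the four rows of your matrix are proportional, so its rank is at most one;
\item no Vandermonde factor in $j,\dots,j+3$ can appear.
\end{itemize}
Your own remark that the $m$-th row is the coefficient extraction of $D^m$ applied to a single series already says this.

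Step 3 is also internally inconsistent. Once you use \eqref{54} to identify $X^{(2)}(i)X^{(2)}(n-i)v$ with $X^{(2)}(n-i)X^{(2)}(i)v$ modulo smaller charge, the lemma concerns at most two classes. For instance, for $n=2j+3$ the four vectors pair up as $\{j,j+3\}$ and $\{j+1,j+2\}$. A symmetrized coefficient matrix then has pairwise equal columns and cannot be nonsingular. What you actually need, for each $n$, is two independent relations among the classes $\{i,n-i\}$ with $|n-2i|\le 3$.

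The missing idea is a source of relations whose $\zeta^n$-coefficients weight $X^{(2)}(i)X^{(2)}(n-i)v$ by a genuinely $i$-dependent function, together with a proof that they hold modulo smaller charge. The vanishing of $X(\alpha,\beta,\alpha,\beta;\zeta)$ only says that $P^NX^{(2)}(\zeta_1)X^{(2)}(\zeta_2)$ vanishes on the diagonal. A first derivative in one variable gives nothing new either: this product is symmetric under $\zeta_1\leftrightarrow\zeta_2$ up to a power of $\zeta_1/\zeta_2$, because $P$ is palindromic. To get a second relation you would need one of the following:
\begin{itemize}
\item that $\lim_{\zeta_1,\zeta_2\to\zeta}D_{\zeta_1}^{2}(P^NX^{(2)}(\zeta_1)X^{(2)}(\zeta_2))$ has smaller charge, i.e.\ higher-order vanishing along $\zeta_1=\zeta_2$;
\item extra relations at the other zeros $\zeta_1=\omega^k\zeta_2$ of $P$, combined with Lemma \ref{lemma_42}.
\end{itemize}
In either case you would then have to check that the resulting finite system is regular. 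The paper's proof cites the same four relations and defers the details to \cite[Lemma 4.5]{KP}, so this is exactly where the work lies, and your proposal does not supply it.

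Two smaller points.
\begin{itemize}
\item In Step 1, the charge-four quasi-particle actually vanishes on $L(\Lambda_0)^{\otimes 3}$ by pigeonhole. Any two of the four fields landing in the same level-one factor contribute a factor $(1-\zeta_i/\zeta_j)^{\langle\delta_i,\delta_j\rangle}$ with $\langle\delta_i,\delta_j\rangle\in\{1,2\}$. Your norm heuristic is therefore unnecessary; moreover, $2(\alpha+\beta)$ has norm $24$, not $8$.
\item In Step 2, the individual coefficients of the vectors $X^{(2)}(i)X^{(2)}(n-i)v$ are not well defined, because infinitely many of these vectors are nonzero. The $\zeta^n$-coefficient is a finite sum only when grouped as $\sum_{a+b=n}\sum_l p_l X^{(2)}(a-l)X^{(2)}(b+l)v$. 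You must first truncate, using commutation modulo smaller charge, before moving terms across.
\end{itemize}
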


\begin{proof}
The lemma can be verified by an argument which closely follows the proof of \cite[Lemma 4.5]{KP} and relies on the quasi-particle relations \eqref{XABAB2}.
In particular, it makes use of the observation that the right-hand sides of \eqref{XABAB2} can be written as linear combinations of all elements \eqref{XABAB3} such that $   i_1+i_2+j_3+j_4=m$.
\end{proof}

\begin{lem}\label{lemma_45}
Let  $i$ be an integer  and   $v\in L(3\Lambda_0)$.
\begin{enumerate}[a)]
\item The  vectors
$$
X^{(2)}(3i-3)X^{(2)}(3i+3)v,\,\,X^{(2)}(3i )X^{(2)}(3i )v,\,\,X^{(2)}(3i+3)X^{(2)}(3i-3)v
$$
can be expressed in terms of the vectors 
$$
X^{(2)}(s)X^{(2)}(6i-s)v\quad\text{with }
s\in\mathbb{Z}\setminus\left\{3i-3,3i,3i+3\right\}
$$
and   the vectors of   strictly smaller total charge. 
\item The  vectors
\begin{gather*}
X^{(2)}(3i-6)X^{(2)}(3i+3)v,\,\,X^{(2)}(3i-3)X^{(2)}(3i )v,\\
X^{(2)}(3i )X^{(2)}(3i-3)v,\,\,X^{(2)}(3i+3)X^{(2)}(3i-6)v 
\end{gather*}
can be expressed in terms of the vectors 
$$
X^{(2)}(s)X^{(2)}(6i-3-s)v\quad\text{with }
s\in\mathbb{Z}\setminus\left\{3i-6,3i-3,3i,3i+3\right\}
$$
and   the vectors of   strictly smaller total charge. 
\end{enumerate}
\end{lem}

\begin{proof}
 Both statements a) and b) in the lemma can be proven by combining Lemma \ref{lemma_42} and Lemma \ref{lemma_44}. In the case of the statement a), note that in a sequence of 12 monomial vectors,
$$X^{(2)}(3i-6)X^{(2)}(3i+6)v,\,X^{(2)}(3i-5)X^{(2)}(3i+5)v,\,\ldots$$
$$ \ \ \ \ \ \ \ \ \ \ \ \ \ \ \ \ \ \ \ \ \ \ \ \ \ \ \ \ \ldots \,X^{(2)}(3i )X^{(2)}(3i )v,\,\ldots\,\,X^{(2)}(3i+5)X^{(2)}(3i-5)v,$$
by using Lemma \ref{lemma_42} all monomial vectors of the form 
$$bv=X^{(2)}(3i-j)X^{(2)}(3i+j)v, \ \ j \equiv \pm 1 \ \text{mod} \ 3$$
can be expressed as a linear combination of monomial vectors of the form $b'v$, with $b'$ of higher total degree type then $b$. Now, on the set of monomial vectors $\{ X^{(2)}(3i-3)X^{(2)}(3i+3)v,\,\,X^{(2)}(3i )X^{(2)}(3i )v,\,\,X^{(2)}(3i+3)X^{(2)}(3i-3)v\}$ we apply  Lemma \ref{lemma_44}.

The proof of b) goes in an analogous way.
\end{proof}

Recall that, due to \eqref{xnuzeta}, we have $X (\beta;\zeta)=X^{(1)}(\omega\zeta )$.
Consider the identities
\begin{align}
& X(\alpha,\alpha,\beta;\zeta)
=\kappa'_1\lim_{\zeta_i\to\zeta}
P_{\alpha,\alpha,\beta}(\zeta_1,\zeta_2,\zeta_2)
X^{(1)}(\zeta_1)X^{(2)}(\zeta_2),\label{rel12_a}\\
& X(\beta,\alpha,\beta;\zeta)
=\kappa'_2 \lim_{\zeta_i\to\zeta}
P_{\beta,\alpha,\beta}(\zeta_1,\zeta_2,\zeta_2)
X^{(1)}(\omega\zeta_1)X^{(2)}(\zeta_2),\label{rel12_b}
\end{align}
which hold for some nonzero $\kappa'_1 ,\kappa'_2\in\mathbb{C}$.
On the other hand, by using the explicit formula for the level one realization \eqref{XEE} and the embedding \eqref{embedding}, we find
\begin{align}
&X(\alpha,\alpha,\beta;\zeta)
=\kappa''_1\,E^-(-\alpha;\zeta)
X(\gamma;\zeta)
E^+(-\alpha;\zeta),\label{rel12_c}\\
&X(\beta,\alpha,\beta;\zeta)
=\kappa''_2\,E^-(-\beta;\zeta)
X(-\gamma;\zeta)
E^+(-\beta;\zeta) \label{rel12_d}
\end{align}
for some nonzero $\kappa''_1 ,\kappa''_2\in\mathbb{C}$. Finally, by combining the equalities \eqref{rel12_a}--\eqref{rel12_d} and
$$
X(\gamma;\zeta)=X^{(1)}(\omega^2\zeta)\quad\text{and}\quad
X(-\gamma;\zeta)=X^{(1)}(\omega^5\zeta) 
$$
(which follow from \eqref{xnuzeta}), we obtain the relations
\begin{align}
&\lim_{\zeta_i\to\zeta}
P_{\alpha,\alpha,\beta}(\zeta_1,\zeta_2,\zeta_2)
X^{(1)}(\zeta_1)X^{(2)}(\zeta_2)
=\kappa_1\,E^-(-\alpha;\zeta)
X^{(1)}(\omega^2\zeta)
E^+(-\alpha;\zeta)
,\label{rel12_e}\\
&
\lim_{\zeta_i\to\zeta}
P_{\beta,\alpha,\beta}(\zeta_1,\zeta_2,\zeta_2)
X^{(1)}(\omega\zeta_1)X^{(2)}(\zeta_2)
=\kappa_2\,E^-(-\beta;\zeta)
X^{(1)}(\omega^5\zeta)
E^+(-\beta;\zeta)
 \label{rel12_f}
\end{align}
for some nonzero $\kappa_1 ,\kappa_2\in\mathbb{C}$.

\begin{lem}\label{lemma_46}
For any $i,j\in\mathbb{Z}$ and $v\in L(3\Lambda_0)$, the vectors
\beq\label{lem_46_1}
X^{(1)}(i)X^{(2)}(3j-3)v\quad\text{and}\quad X^{(1)}(i-3)X^{(2)}(3j)v
\eeq
can be expressed in terms of the vectors 
\beq\label{lem_46_2}
X^{(1)}(r)X^{(2)}(i+3j-3-r)v\quad\text{with }r\in \mathbb{Z}\setminus\left\{ i-3,i \right\}
\eeq
and   the vectors of   strictly smaller total charge. 
\end{lem}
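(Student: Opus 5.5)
The approach is the same as for Lemma \ref{lemma_41}: treat the two relations \eqref{rel12_e} and \eqref{rel12_f} as a $2\times 2$ linear system in the two unknown vectors \eqref{lem_46_1}.

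First, extract from both relations the coefficient of $\zeta^{n}$ with $n=i+3j-3$, the total degree of both vectors in \eqref{lem_46_1}. On the left-hand sides, expand $P_{\alpha,\alpha,\beta}(\zeta_1,\zeta_2,\zeta_2)=P_{\alpha,\alpha}(\zeta_1/\zeta_2)P_{\alpha,\beta}(\zeta_1/\zeta_2)$, a polynomial of degree $8$ in $x=\zeta_1/\zeta_2$, and similarly $P_{\beta,\alpha,\beta}$. Taking the limit $\zeta_1,\zeta_2\to\zeta$ and reading off the coefficient of $\zeta^n$ gives
\[
\sum_{r}c_{r}\,X^{(1)}(r)X^{(2)}(n-r)v \quad\text{and}\quad \sum_{r}d_{r}\,\omega^{r}X^{(1)}(r)X^{(2)}(n-r)v ,
\]
where the coefficients $c_r,d_r$ come from $P(x)$. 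In these sums the $X^{(1)}$-index runs over a window of consecutive values around $i$ and $i-3$.

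On the right-hand sides, the coefficient of $\zeta^n$ in $E^-(\mp\cdot;\zeta)X^{(1)}(\omega^{k}\zeta)E^+(\mp\cdot;\zeta)v$ is a combination of vectors $E^-(\cdot;-a_1)X^{(1)}(b)E^+(\cdot;a_2)v$. These contain a single charge-one quasi-particle, so their total charge is $1<3$, strictly smaller than the charge of \eqref{lem_46_1}. They therefore fall into the allowed class.

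Next, move every term $X^{(1)}(r)X^{(2)}(n-r)v$ with $r\notin\{i-3,i\}$ to the other side; these are exactly the vectors \eqref{lem_46_2}. We are left with two linear equations
\[
a_{11}X^{(1)}(i)X^{(2)}(3j-3)v+a_{12}X^{(1)}(i-3)X^{(2)}(3j)v=\cdots,\qquad
a_{21}X^{(1)}(i)X^{(2)}(3j-3)v+a_{22}X^{(1)}(i-3)X^{(2)}(3j)v=\cdots .
\]
Here $a_{1\bullet}$ are the relevant coefficients of $P_{\alpha,\alpha}P_{\alpha,\beta}$, and $a_{2\bullet}$ are those of $P_{\beta,\alpha}P_{\alpha,\beta}$ (note $P_{\beta,\alpha}=\overline{P_{\alpha,\beta}}$) multiplied by the twists $\omega^{i}$ and $\omega^{i-3}=-\omega^{i}$ coming from $X^{(1)}(\omega\zeta_1)$.

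There is a subtlety in how to index the coefficients. A first attempt would be to take $a_{11},a_{12}$ as coefficients of $x^{p}$ and $x^{p+3}$ at a common shift $p$. The leading coefficients of the two polynomials are proportional, however, and the factor $\omega^{-3}=-1$ must be tracked. So the $X^{(1)}$-index should be fixed relative to the limit expansion rather than relative to a common shift. Concretely, the coefficient of $X^{(1)}(r)X^{(2)}(n-r)$ is the coefficient of $x^{r-m}$ in $P$, up to how the limit is normalized.

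The main obstacle is verifying that the determinant $a_{11}a_{22}-a_{12}a_{21}$ is nonzero, uniformly in $i$ up to the factor $\omega^i$. This is a direct computation with the explicit polynomials from the Example: compute $P_{\alpha,\alpha}(x)P_{\alpha,\beta}(x)$ and $\overline{P_{\alpha,\beta}}(x)P_{\alpha,\beta}(x)$, then pick the coefficients of $x^{k}$ and $x^{k+3}$ at the relevant positions.

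Because the $X^{(1)}$-index can differ between the two relations, the argument may also have to enlarge the system. One option is to use the twisted relation together with \eqref{xnuzeta} to produce further independent equations. Another is to invoke \eqref{53}/\eqref{54} to reorder terms modulo lower charge, as in the proof of Lemma \ref{lemma_41}, where regularity of the matrix was likewise checked by direct computation. Once the determinant is nonzero, solving the system expresses both vectors \eqref{lem_46_1} through \eqref{lem_46_2} and vectors of strictly smaller total charge, which proves the lemma.
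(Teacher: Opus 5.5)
Your strategy is the paper's: take the coefficient of $\zeta^{n}$, $n=i+3j-3$, in \eqref{rel12_e} and \eqref{rel12_f}, move everything except the two target vectors across, and invert a $2\times 2$ system. But the proposal stops at the one nontrivial step. You never show the matrix is regular, and you even suggest the system might have to be enlarged. The reason the determinant looks intractable is that you describe its entries wrongly. After $\lim_{\zeta_1,\zeta_2\to\zeta}$, the coefficient of $\zeta^n$ is $\sum_{a+b=n}\sum_k p_k X^{(1)}(a-k)X^{(2)}(b+k)v$, where $P(x)=\sum_k p_k x^k$. So each product $X^{(1)}(r)X^{(2)}(n-r)v$ receives a contribution from every $k$: it is not weighted by a single coefficient $p_{r-m}$, and $r$ is not confined to a window around $i$. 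Also, the second polynomial is $P_{\beta,\alpha,\beta}(\zeta_1,\zeta_2,\zeta_2)=P_{\alpha,\beta}(1)\,P_{\beta,\alpha}(x)P_{\beta,\beta}(x)$, i.e.\ $\widetilde P=\overline{P_{\alpha,\beta}}\,P_{\alpha,\alpha}$, not $P_{\beta,\alpha}P_{\alpha,\beta}$.

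The missing computation is short. Take the finite range of splittings $a+b=n$ wide enough. Then every product whose coefficient is only a partial sum of the $p_k$ falls into one of two cases. For $r\ll 0$ it is zero, since $X^{(2)}(n-r)v=0$. For $r\gg 0$ it has smaller charge: by \eqref{e220_label}, $X^{(1)}(r)X^{(2)}(n-r)v\equiv X^{(2)}(n-r)X^{(1)}(r)v=0$ modulo vectors of strictly smaller charge. Hence, modulo smaller charge, the two relations read
\[
P(1)\sum_r X^{(1)}(r)X^{(2)}(n-r)v\equiv 0,\qquad \widetilde P(1)\sum_r \omega^{r}X^{(1)}(r)X^{(2)}(n-r)v\equiv 0,
\]
with $P=P_{\alpha,\alpha}P_{\alpha,\beta}$. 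The matrix for the unknowns $X^{(1)}(i)X^{(2)}(3j-3)v$ and $X^{(1)}(i-3)X^{(2)}(3j)v$ is therefore $\begin{pmatrix}P(1)&P(1)\\ \omega^{i}\widetilde P(1)&-\omega^{i}\widetilde P(1)\end{pmatrix}$. Its determinant is $-2\omega^{i}P(1)\widetilde P(1)$, which is nonzero because $P_{\alpha,\alpha}(1)\neq 0\neq P_{\alpha,\beta}(1)$. The factor $\omega^{-3}=-1$ you worried about is exactly what makes the system regular, so no additional equations are needed. With this supplied, your argument coincides with the paper's.
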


\begin{proof}
The lemma can be verified by  adjusting the arguments from the proof of Lemma \ref{lemma_41}. More specifically, by using the quasi-particle relations \eqref{rel12_e} and \eqref{rel12_f} and the commutation relation \eqref{e220_label}, one   expresses two linear combinations of vectors \eqref{lem_46_1} in terms of vectors
\eqref{lem_46_2} and the vectors of   strictly smaller length. Finally, one checks directly that the $2\times 2$ matrix, such that its entries are coefficients of these linear combinations, is regular, which implies the lemma.
\end{proof}

\section{Proof of Theorem \ref{main_thm}}\label{section_05}
In this section, we  prove the main theorem. 
Clearly, the set of all   vectors as in \eqref{monomial_vectors},
$$
\alpha(i_1)\cdots \alpha(i_r)\, 
X^{(1)}(j_1)\cdots X^{(1)}(j_s)\,
X^{(2)}(k_1)\cdots X^{(2)}(k_t)\, v_{3\Lambda_0},
$$
such that
$i_1,\ldots,i_r, j_1,\ldots ,j_s,   k_1,\ldots , k_t<0$, i.e., which do not necessarily satisfy the conditions \eqref{1_cond}--\eqref{init_cond},
spans the standard module $L(3\Lambda_0)$.
In Subsection \ref{section_0501}, we show that it is sufficient to consider the
monomial vectors which satisfy initial conditions \eqref{init_cond}.
Next, in Subsection  \ref{section_0502}, we refine this   by proving that the 
 vectors  \eqref{monomial_vectors} which satisfy all conditions \eqref{1_cond}--\eqref{init_cond} span $L(3\Lambda_0)$. Finally, in Subsection  \ref{section_0503},
 we use the Kanade--Russell--Kurşung\"{o}z formula to establish the linear independence of these vectors.

\subsection{Initial conditions}\label{section_0501}

Using \eqref{XEE} we get 
\beq \label{ICX1}
X^{(1)}(-1)v_{3\Lambda_0} \in L(3\Lambda_0)_{(0)}.
\eeq
\begin{rem}\label{remi1}
By using this relation as well as commutation relations  \eqref{e24}  and  \eqref{e25}, every monomial vector of the form $\cdots X^{(1)}(m)X^{(1)}(-1)v_{3\Lambda_0}$ can be expressed as a linear combination of   monomial vectors of the form $\cdots X^{(1)}(m')v_{3\Lambda_0}$, where $m' >m$. 
Finally, the monomial vectors of the form $\cdots X^{(1)}(m')v_{3\Lambda_0}$ have smaller total charge than  $\cdots X^{(1)}(m)X^{(1)}(-1)v_{3\Lambda_0}$.  
\end{rem}
Using (\ref{53}) we get 
\beqn
X^{(2)}(-3)v_{3\Lambda_0}\equiv \sum_{\substack{m_1+m_2=-3,\\ m_1, m_2 \leq -1}}\kappa_{m_1,m_2}X^{(1)}(m_{1})X^{(1)}(m_{2})v_{3\Lambda_0} \ \ \ \text{mod} \ L(3\Lambda_0)_{(1)},
\eeqn
where $\kappa_{m_1,m_2}\in \mathbb{C}\setminus \{0\}$. Now, relation  \eqref{ICX1}  implies  
\beq \label{ICX2_0}
X^{(2)}(-3)v_{3\Lambda_0} \in L(3\Lambda_0)_{(1)}.
\eeq
\begin{rem}\label{remi2} Relation  \eqref{ICX2_0}, together with Lemma \ref{lemma_42} and commutation relations in Lemma \ref{implem}, implies that every monomial vector of the form $\cdots X^{(2)}(m)v_{3\Lambda_0}$, with $m>-6$ can be expressed as a linear combination of   monomial vectors which 
possess   smaller total charge, so that they
are  greater   with respect to the linear order   \eqref{lin_ord}. 
\end{rem}

\subsection{Spanning set}\label{section_0502}

 We will show that the monomial vectors   \eqref{monomial_vectors},
\beq\label{span1}
\alpha(i_1)\cdots \alpha(i_r)\, 
X^{(1)}(j_1)\cdots X^{(1)}(j_s)\,
X^{(2)}(k_1)\cdots X^{(2)}(k_t)\, v_{3\Lambda_0},
\eeq
 which satisfy all conditions \eqref{1_cond}--\eqref{init_cond} span $L(3\Lambda_0)$.
The proof goes by induction
  on the total degree $D=I+J+K$, where $I=|i_1|+\cdots +|i_r|$, $J=|j_1|+\cdots +|j_s|$ and $K= |k_1|+\cdots +|k_t|$, and by an induction on the linear order \eqref{lin_ord}. 
	By using commutation relations \eqref{e22} and \eqref{e220_label}, we can write any monomial vector $bv_{3\Lambda_0}$ as a linear combination of monomial vectors of the form as in \eqref{span1} which satisfy \eqref{1_cond}, \eqref{2_cond} and
	 $k_1\leq k_2\leq \ldots \leq k_t< 0$. By Remarks \ref{remi1} and \ref{remi2}, we can further assume that 
	$$
	j_p\leq -2\text{ for every }1\leq p\leq s \qquad\text{and}\qquad  k_p\leq -6
	\text{ for every } 1\leq p\leq t.
	$$

The condition $k_p \equiv 0 \ \text{mod} \ 3$ for all $1 \leq p \leq t$ from \eqref{3_cond}  appears as a consequence of Lemma \ref{lemma_42}. By combining it with Lemma \ref{implem}, the monomial vector $bv_{3\Lambda_0}$ with a factor $X^{(2)}(k_p)$, such that $k_p \not \equiv 0 \ \text{mod} \ 3$, can be replaced with linear combination of monomial vectors $b'v_{3\Lambda_0}$ of the equal total degree and equal color-type as $b$, but $b'$ having at least one factor $X^{(2)}(k'_p)$ with $k'_p> k_p$. There are only finite number of monomials $b'$ which do not annihilate $v_{3\Lambda_0}$.   

If a quasi-particle monomial $b$ has a factor $X^{(2)}(k_p)$, with degree $k_p$ such that $k_p > k_{p+1} -12$, for some $1 \leq p \leq t$, then Lemma \ref{lemma_45} implies that $bv_{3\Lambda_0}$ can be expressed as a linear combination of quasi-particle monomial vectors $b'v_{3\Lambda_0}$, where $b'$ and $b$ have equal color-type and total degree, but $b' > b$ with respect to the linear order and as a linear combination of monomial vectors $b'v_{3\Lambda_0}$ with monomials $b'$ of greater color-type than $b$ with respect to the reverse lexicographic order.

If a quasi-particle monomial $b$ has a factor $X^{(1)}(j_p)$, with degree $j_p$ such that $j_p > -2 -6t$, for some $1 \leq p \leq s$, then, by Lemma \ref{lemma_46}, follows that $bv_{3\Lambda_0}$ can be expressed as a linear combination of quasi-particle monomial vectors $b'v_{3\Lambda_0}$ with $b’$ and $b$ having the same total degree and charge-type and $b'$ having at least one factor $X^{(1)}(j'_p)$ with $j'_p> j_p$ for some $1\leq p \leq s$ and as a linear combination of monomial vectors $b'v_{3\Lambda_0}$ with monomials $b'$ of smaller total charge than $b$. 

Finally, if a quasi-particle monomial $b$ has a factor $X^{(1)}(j_p)$, with degree $j_p$ such that $j_p > j_{p+1} -4$, for some $1 \leq p \leq s$, then by Lemma \ref{lemma_41} we may express $bv_{3\Lambda_0}$ as a linear combination of greater monomial vectors and, by induction, we may omit it from our spanning set. 

From the above analysis follows the spanning property of the set in  Theorem \ref{main_thm}. 

\subsection{Linear independence}\label{section_0503}
To finalize the proof of Theorem \ref{main_thm}, it remains to show that the corresponding spanning set is linearly independent.
This will be proved by demonstrating that the number of vectors of degree $n$ in the spanning set is given by the   $n$-th coefficient in  the right-hand side of   formula   (7) from  \cite[Theorem 10]{Kur} (see also formula (88) in \cite{KR}),
\beq\label{Kurs}
\frac{1}{(q,q^5;q^6)(q^2,q^3,q^9,q^{10};q^{12})} =F\cdot \left. \sum_{n_1,n_2\geq 0} \frac{q^{2n_1^2+6n_1n_2+6n_2^2}x^{n_1+2n_2}}{(q;q)_{n_1}(q^3;q^3)_{n_2}} \ \right|_{x=1},
\eeq
where $F=(q,q^5;q^6)^{-1}$. The product side in \eqref{Kurs} is a principally specialized character formula of standard module $L(3\Lambda_0)$ (cf. \cite{Capp1,Capp2,Capp3}).
Let us explain the correspondence between the monomial vectors from the spanning set and the coefficients of the sum side in \eqref{Kurs}.

By identifying monomials $\alpha(-i_1)\cdots \alpha(-i_r)$ with partitions $i_1+\cdots +i_r$, follows that the number of monomials
$$\alpha(i_1)\cdots \alpha(i_r), \  \ i_1+\cdots +i_r=n,$$
satisfying 
$$r \geq 0,\quad i_1\leq i_2\leq \ldots \leq i_r< 0,\quad i_p \equiv \pm 1\mod 6$$
equals to the $n$-th coefficient in
\begin{eqnarray}\nonumber
F=(q,q^5;q^6)^{-1}&=&\prod_{i \equiv \pm 1 \ \text{mod} \ 6}(1-q^i)^{-1}\\
\nonumber
&=&\prod_{i \geq 1}(1+q^{6i-5}+q^{2(6i-5)}+\cdots)(1+q^{6i-1}+q^{2(6i-1)}+\cdots).
\end{eqnarray}

For  fixed color-type $(n_1,n_2)$, we rewrite the difference conditions \eqref{diff_cond} of the spanning set as 
\begin{eqnarray}\nonumber
\sum_{p=1}^{n_1}(2+4(p-1)+6n_2)&=&2n_1^2+6n_1n_2,\\
\nonumber
\sum_{p=1}^{n_2}(6+12(p-1))&=&6n_2^2.\end{eqnarray}
For $n \in  \mathbb{N}$, set $(q^a;q^a)_n=(1- q^a)\cdots (1- q^{na})$, where $a = 1$ or $a = 3$. We have 
$$(q^a;q^a)_n^{-1}=\sum_{i \geq 0}p_n(i)q^{ai}.$$
Here $p_n(i)$ denotes the number of partition of $ia$ with at most $n$ parts (cf. \cite{A}).

From this follows that for the quasi-particle monomial 
$$
X^{(1)}(j_1)\cdots X^{(1)}(j_s)\,
X^{(2)}(k_1)\cdots X^{(2)}(k_t) 
$$
of color-type $(n_1,n_2)=(s,t)$ the term 
\beq\label{KursAG}
\frac{q^{2n_1^2+6n_1n_2+6n_2^2}x^{n_1+2n_2}}{(q;q)_{n_1}(q^3;q^3)_{n_2}}
\eeq
counts products of $n_1$ quasi-particles of charge 1 and $n_2$ quasi-particles of charge 2 which satisfy the conditions \eqref{2_cond}--\eqref{init_cond}. Hence, linear independence of the spanning set from   Theorem \ref{main_thm} follows.

\section*{Acknowledgment} 
This work has been partially supported by the Croatian Science Foundation under the project IP-2025-02-4720 and by the project ``Implementation of cutting-edge research and its application as part of the Scientific Center of Excellence for Quantum and Complex Systems, and Representations of Lie Algebras'', Grant No. PK.1.1.10.0004, co-financed by the European Union through the European Regional Development Fund - Competitiveness and Cohesion Programme 2021-2027. This research was supported by the European Union -- NextGenerationEU through the National Recovery and Resilience Plan 2021-2026 Institutional grant of University of Zagreb Faculty of Science (IK IA 1.1.3. Impact4Math).

\linespread{1.0}

\end{document}